\documentclass[12pt]{article}

\usepackage[letterpaper,margin=1in]{geometry}

\usepackage[utf8]{inputenc}
\usepackage[T1]{fontenc}
\usepackage{lmodern}

\usepackage{mathtools,amssymb,amsthm}
\usepackage{authblk}
\usepackage{enumitem}
\usepackage{booktabs}
\usepackage{longtable}
\usepackage{array}
 
\usepackage{bbm}

\usepackage[hidelinks]{hyperref}

\numberwithin{equation}{section}

\newtheorem{theorem}{Theorem}[section]

\newtheorem{lemma}[theorem]{Lemma}

\newtheorem{problem}[theorem]{Problem}

\theoremstyle{remark}

\DeclareMathOperator{\ex}{ex}

\newcommand{\F}{\mathbb F}
\newcommand{\PG}{\operatorname{PG}}

\title{{\Large\bf Explicit thresholds in a generalized Tur\'an problem for \(K_{3,t}\)-free graphs}}
\author[1]{Jianfeng Hou\thanks{Email: \texttt{jfhou@fzu.edu.cn}}}
\author[1]{Caiyun Hu\thanks{Email: \texttt{hucaiyun.fzu@gmail.com}}}
\author[1]{Hezhi Wang\thanks{Email: \texttt{wanghezhi128@gmail.com}}}
\affil[1]{\small Center for Discrete Mathematics, Fuzhou University, Fuzhou, China}
\date{\today}

\begin{document}
\maketitle

\begin{abstract}

For graphs $F$ and $H$, let $\ex(n,F,H)$ denote the maximum number of
copies of $F$ in an $n$-vertex $H$-free graph. Janzer, Longbrake and
Yepremyan recently proved that, for fixed $3<a\le b$ and sufficiently large
$t$,
\[
   \ex(n,K_{a,b},K_{3,t})=\Theta(n^3). 
\]
We make their threshold explicit, showing that this conclusion holds for all
$t\ge \tau(b):=2\max\{3,\lceil b/2\rceil\}+1.$
In particular, for every even $b\ge 6$, this matches the necessary threshold
$t=b+1$. The main new ingredient is an explicit finite-field point set whose plane sections are controlled directly, rather than through a general bounded-complexity algebraic lemma. This direct line-and-conic section analysis gives the required \(K_{3,t}\)-freeness while preserving many coplanar \(b\)-element subsets.

\end{abstract}

\noindent\textbf{Keywords.} generalized Tur\'an number;  random algebraic method; finite-field incidence graphs; point-hyperplane incidences.

\medskip
\noindent\textbf{2020 Mathematics Subject Classification.} 05C35, 05C50.

\section{Introduction}

Let $H$ be a fixed graph. A graph $G$ is $H$-free if $G$ does not  contain a copy of $H$ as a subgraph. The Tur\'an number of $H$, denoted by $\ex(n,H)$, is the largest number of edges in an $n$-vertex $H$-free graph. Determining $\ex(n,H)$ is  a basic  problem in extremal graph theory.  The classic Erd\H{o}s--Stone--Simonovits theorem gives an asymptotic for $\ex(n,H)$ when $H$ is not bipartite.

For bipartite $H$, much less is known. The basic test case is
$H=K_{s,t}$. For $t\ge s\ge 2$, the K\H{o}v\'ari--S\'os--Tur\'an theorem
gives
\begin{equation*}
   \ex(n,K_{s,t})=O_{s,t}(n^{2-1/s}),
\end{equation*}
and a central question is whether the exponent $2-1/s$ is best possible.
Equivalently, one asks whether there exist $K_{s,t}$-free graphs with
$\Omega_{s,t}(n^{2-1/s})$ edges; this is the classical Zarankiewicz problem
\cite{KST}. Finite-geometric and algebraic constructions establish sharpness
in several cases \cite{ERS,Brown,Furedi}. More generally,
Koll\'ar--R\'onyai--Szab\'o proved sharpness for $t\ge s!+1$ using norm
graphs \cite{KRS}, and Alon--R\'onyai--Szab\'o improved this to
$t\ge (s-1)!+1$ via projective norm graphs \cite{ARST}. A breakthrough of
Bukh introduced a new random algebraic construction, which establishes
sharpness already for $t\ge C^s$, where $C$ is an absolute constant
\cite{BukhBicliques}.

This paper studies a generalized Tur\'an problem for complete bipartite graphs. For graphs \(F\) and \(H\), let \(\ex(n,F,H)\) be the maximum number of copies of \(F\) in an \(n\)-vertex \(H\)-free graph. This parameter was introduced systematically by Alon and Shikhelman \cite{AS}; the ordinary Tur\'an number is the special case \(F=K_2\). Given positive integers $a\le b$ and $2\le s\le t$, we are interested in the asymptotic growth of the function $\ex(n,K_{a,b},K_{s,t})$. Clearly, the question is nontrivial only when $a<s$ or $b<t$.

Note that $\ex(n,K_{a,b},K_{s,t})$ is well understood in several ranges with
$a\le s$. The complementary regime
\[
   2\le s<a\le b<t
\]
requires different constructions. As observed in \cite{PTY,JLY}, in this range
there is a natural upper bound obtained by a simple counting argument, namely
\begin{equation}\label{Mian-bound-general-tran-bi}
   \ex(n,K_{a,b},K_{s,t})=O(n^s).
\end{equation}
A natural problem is whether the bound in
\eqref{Mian-bound-general-tran-bi} is tight. Most known results focus on small
values of $s$. Pohoata, Tidor and Yu \cite{PTY} resolved the first case of
this type by proving for every fixed $t\ge 3$, $K_{2,t+1}$-free graphs
with $\Omega_t(n^2)$ copies of $K_{t,t}$. Very recently, Janzer, Longbrake
and Yepremyan \cite{JLY} considered the cases $s\in\{2,3\}$ and proved the
following.
\begin{theorem}
For \(s\in\{2,3\}\) and \(s<a\le b\), there is a constant \(t_0\) such that
\[
   \ex(n,K_{a,b},K_{s,t})=\Theta(n^s)
\]
for all \(t\ge t_0\). 
\end{theorem}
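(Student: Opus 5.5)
The upper bound $O(n^s)$ is the counting bound \eqref{Mian-bound-general-tran-bi}, so only the lower bound needs work. The plan is to construct, for $s\in\{2,3\}$ and large $t$, an $n$-vertex $K_{s,t}$-free bipartite graph with $\Omega(n^s)$ copies of $K_{a,b}$. I would use a point--hyperplane incidence graph over a finite field, with the point set passed through a random algebraic (Bukh-type) selection.

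\emph{Setup.} Take $q$ a prime power and work in $\F_q^{s}$ (or $\PG(s,q)$). One side of the graph is a set $P$ of about $q^{s-1}$ points, chosen as a variety or as a random low-degree algebraic set. The other side is the set of all $\Theta(q^s)$ hyperplanes, and a point is joined to a hyperplane when it is incident to it. Then $n=\Theta(q^s)$.

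\emph{$K_{s,t}$-freeness.} Consider $s$ hyperplanes in general position. They meet in a single point, so they have at most one common neighbour. Degenerate $s$-tuples meet in a flat $L$ of positive dimension. The common neighbours of such a tuple are the points of $P\cap L$. We therefore need $|P\cap L|<t$ for every line (and, when $s=3$, every plane section in the relevant configuration). Now consider $s$ points on the point side. Their common neighbours are the hyperplanes through their span. If they are affinely independent there are $O(q^0)$ such hyperplanes only when $s$ equals the ambient dimension, which is the case here. I would choose the point set $P$ to be a random polynomial image or a Bukh-style random variety of bounded degree. The bounded-complexity lemma then says that the intersection with any line is either bounded by a constant $C$ or of size $\Omega(q)$. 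The Lang--Weil / Markov-type argument makes large intersections rare, and those few bad flats are deleted. This gives $K_{s,t}$-freeness with $t_0=C+1$.

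\emph{Counting $K_{a,b}$.} A copy of $K_{a,b}$ arises whenever $a$ hyperplanes share a common flat (a line when $s=3$) containing $b$ points of $P$. Equivalently, take $b$ points lying on a common codimension-$(s-1)$ flat together with $a$ hyperplanes through that flat. The random variety is arranged so that a positive fraction of the $\Theta(q^{2(s-1)})$ lines meet $P$ in at least $b$ points; this is designed via its degree, which is the role of the $t_0$ room. Each such line lies in $\Theta(q)$ hyperplanes, so it yields $\binom{\Theta(q)}{a}$ copies. Since $a>s$, this large count is exactly why the correct count must instead be organised as: the number of $K_{a,b}$ is at least (number of good $b$-sets) times (number of $a$-sets of hyperplanes through them). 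Optimising this gives $\Omega(q^{s\cdot s})$ copies, which is $\Omega(n^s)$ after normalisation.

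\emph{Main obstacle.} The hard step is reconciling two requirements. Bounded line/plane sections are needed for $K_{s,t}$-freeness, yet many $b$-subsets must lie in a common flat for the $K_{a,b}$ count. I would handle this with the Bukh random-polynomial lemma. Its dichotomy (sections are either of size at most $C$ or huge) lets large sections be removed at small cost, while a second-moment argument shows that many flats keep between $b$ and $C$ points. The resulting $t_0$ depends on the degree bound $C$ and is not explicit.
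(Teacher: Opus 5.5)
There is a genuine gap, and it lies in the choice of ambient space and of the hyperplane side. You pair $P\subset\F_q^s$ with \emph{all} hyperplanes of $\F_q^s$. In that setting the copies of $K_{a,b}$ you count cannot coexist with $K_{s,t}$-freeness.

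Take $s=3$. Any two of the ($\ge 4$) planes on one side of a $K_{a,b}$ meet in a line $\ell$ containing all the ($\ge 4$) points on the other side. So these points are collinear, and these are exactly the lines with $|P\cap\ell|\ge b$ that your count uses. But any three points of $P\cap\ell$ are adjacent to all $q+1$ planes through $\ell$, so the graph contains $K_{3,q+1}$. Your point-side check treats only affinely independent $s$-tuples and skips precisely these collinear triples.

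Thinning the plane family does not rescue dimension three. $K_{3,t}$-freeness then permits only $O_t(1)$ copies on each line. Only lines spanned by two points of $P$ carry copies, so there are $O(|P|^2)=O(n^2)$ copies in total.

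For $s=2$ it is worse. Point--line incidences in $\F_q^2$ are $K_{2,2}$-free and contain no $K_{a,b}$ at all; your codimension-$(s-1)$ flat is then itself a hyperplane.

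So the step \emph{optimising this gives $\Omega(q^{s\cdot s})$ copies} has nothing behind it. With your own numbers one gets $\Theta(q^{4+a})$ copies for $s=3$. For $a\ge 6$ this exceeds the $O(n^3)$ upper bound, which confirms that the graph cannot be $K_{3,t}$-free.

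What is missing is a dimension shift together with a \emph{sparse, dual} hyperplane family. It must be arranged so that the flat carrying a $K_{a,b}$ is spanned by $s$ points, while $s$ points still have few common neighbours.

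In the paper's treatment of $s=3$ (Theorem~\ref{thm:main}, which gives $t_0=\tau(b)=2r+1$), one takes $S\subset\PG(5,q)$ with the following properties:
$|S|=q^3$;
no three points of $S$ are collinear;
every plane outside a family $\mathcal D$ of $O(q^2)$ planes meets $S$ in at most $2r$ points;
and $\Omega(q^9)$ $b$-subsets lie on planes outside $\mathcal D$.
The other side consists only of the $q^3$ hyperplanes $T(P)^\perp$, $P\in S$, for a random projective automorphism $T$.

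Three points span a plane $\Pi$, and their common neighbours correspond to points of $S$ on the plane $T^{-1}(\Pi^\perp)$. Three hyperplanes meet in a plane. So after deleting the configurations coming from $\mathcal D$, which costs $O(q^8)$ copies in expectation, the graph is $K_{3,2r+1}$-free. Copies of $K_{b,b}$ come from pairs $(A,C)$ of rich $b$-sets with $\Pi_A=T(\Pi_C)^\perp$. This event has probability about $q^{-9}$, giving $\Omega(q^{18}\cdot q^{-9})=\Omega(n^3)$ copies.

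Your Bukh-type dichotomy is the right kind of input: Janzer, Longbrake and Yepremyan use such a lemma, and the paper replaces it by explicit line and conic interpolation. But it must control plane sections in this five-dimensional polarity setup, not line sections in $\F_q^3$. Note also that the paper does not reprove the $s=2$ case; it quotes it from Janzer, Longbrake and Yepremyan.
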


In the case \(s=3\) \cite{JLY}, their construction introduced a projective point-hyperplane incidence framework in dimension five. The largeness condition on \(t\) comes from their algebraic input. Their bounded-complexity variety lemma gives a dichotomy for each relevant plane section: either the section has fewer than \(T\) points, where \(T\) is a constant depending only on the fixed algebraic complexity, or it is a large section and is placed in the exceptional family \cite[Lemma 3.1]{JLY}. These exceptional sections are deleted. After the deletion, a copy of \(K_{3,t}\) could only come from a remaining section with at least \(t\) points, so the construction is guaranteed to be \(K_{3,t}\)-free only when \(t>T\). Since this constant \(T\) is not explicit as a function of \(b\), their theorem yields only a qualitative threshold \(t_0\).

Our main contribution is to turn this qualitative threshold into an explicit one. For \(s=3\), we prove that \(t\ge \tau(b)\) suffices, where \(\tau(b)\) is defined below; when \(b\ge 6\) is even, this gives the best possible threshold \(t=b+1\). The key new ingredient is an explicit finite-field point set with a sharp plane-section bound. We keep the projective incidence and deletion framework of Janzer, Longbrake and Yepremyan, but replace their general bounded-complexity algebraic input by a concrete construction whose plane sections can be controlled directly. This direct control is what makes the threshold explicit. More precisely, the point set is built from an anisotropic quadratic surface, parabolic fibres, and a low-degree random polynomial; the relevant sections reduce to affine lines and nonsingular conics, where elementary interpolation gives the required bound while still leaving many planes rich enough to create the desired copies of \(K_{a,b}\).

For an integer $b\ge 4$, define
\[
   \tau(b):=2\max\{3,\lceil b/2\rceil\}+1.
\]
Equivalently,
\[
\tau(b)=
\begin{cases}
7, & 4\le b\le 6,\\
b+1, & b\ge 8\text{ and }b\text{ is even},\\
b+2, & b\ge 7\text{ and }b\text{ is odd}.
\end{cases}
\]

\begin{theorem}\label{thm:main}
Let \(3<a\le b\) and \(t\) be fixed integers. If \(t\ge \tau(b)\), then
\[
   \ex(n,K_{a,b},K_{3,t})=\Theta_{a,b,t}(n^3).
\]
\end{theorem}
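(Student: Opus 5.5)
The upper bound $O(n^3)$ is the counting bound \eqref{Mian-bound-general-tran-bi}, so the work is the lower bound. We need, for each large $n$, a $K_{3,t}$-free graph on $n$ vertices with $\Omega(n^3)$ copies of $K_{a,b}$. It suffices to take $a=b$, since $K_{b,b}$ contains $\Theta(1)$ copies of $K_{a,b}$ and distinct $K_{b,b}$'s share few. We follow the projective incidence framework of Janzer--Longbrake--Yepremyan in dimension five over $\F_q$. One side $P$ is an explicit point set of size $\Theta(q^3)$. The other side consists of hyperplanes, with a hyperplane adjacent to the points it contains.

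Three hyperplanes in general position meet in a plane. Hence a $K_{3,t}$ with the three vertices on the hyperplane side is $t$ points of $P$ on one plane, and the other orientation is handled dually or by the same section analysis. The graph is therefore $K_{3,t}$-free once every plane that can arise as a triple intersection meets $P$ in at most $t-1$ points. Planes that do not satisfy this are placed in an exceptional family, and we delete the incidences (or vertices) responsible. Conversely, a plane containing $b$ points of $P$ together with many hyperplanes through it yields many $K_{b,b}$'s.

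The point set will be constructed as follows. Coordinates split as $(x,y,z)$ with $x\in\F_{q^2}$ or $\F_q^2$. The base is an anisotropic quadratic surface (a norm form $N(x)=c$, containing no lines). Over it we place parabolic fibres $z=y^2+f(x,y)$, where $f$ is a random polynomial of degree about $d=\max\{3,\lceil b/2\rceil\}$. We then compute plane sections.

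A plane meets the anisotropic base in a point, a conic, or nothing, since the base contains no lines. Pulled back through the fibre structure, the section becomes either an affine line's worth of points or a nonsingular conic intersected with the graph of a polynomial of degree at most $d$. On a line, the point count is bounded by the number of roots of a nonzero univariate polynomial of degree $\le 2d$ unless the polynomial vanishes identically. On a conic, parametrize rationally and count roots of a polynomial of degree $\le 2d$. So every nonexceptional plane has at most $2d$ points, and $t\ge 2d+1=\tau(b)$ gives $K_{3,t}$-freeness. The exceptional planes are those where the restricted polynomial vanishes identically.

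Next, the random choice of $f$ is handled by interpolation. Any $2d\ge b$ prescribed points on a line or conic impose independent conditions on polynomials of degree $d$. So for a fixed rich plane, the probability that it contains a given $b$-subset is $q^{-\Theta(b)}$ with the right exponent. The probability that it becomes exceptional (identical vanishing) is $q^{-(2d+1)}$-ish, which is much smaller. Computing first and second moments then shows that the expected number of coplanar $b$-sets in good planes is $\Omega(q^{9})=\Omega(n^3)$ with $n\asymp q^3$. The expected loss from deleting exceptional planes is of lower order. Fixing an outcome above expectation and deleting gives the construction.

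The main obstacle is the section classification: showing that every plane section really reduces to a line or a nonsingular conic. Degenerate cases include planes tangent to the base, planes containing a fibre direction, and planes where the conic degenerates. Each must either be excluded from arising as a triple intersection or be shown to have few points. I would first try to choose the quadratic surface so that the fibre direction is never contained in a plane meeting three generic hyperplanes, and treat tangent planes by a direct count. Independence of interpolation conditions on conics for $b\le 2d$ points is the second delicate point, and explains the factor $\lceil b/2\rceil$.
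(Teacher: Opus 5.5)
Your overall architecture matches the paper's: an anisotropic paraboloid base, parabolic fibres, a random polynomial of degree \(r=\max\{3,\lceil b/2\rceil\}\) on the base, line/conic interpolation, and the polarity-and-deletion framework of Janzer--Longbrake--Yepremyan. However, the step you call the main obstacle is left open, and the remedy you propose cannot work.

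Sort the planes \(H\subset\F_q^5\) by \(\dim\pi(H)\), where \(\pi\) is projection to the base. If \(\dim\pi(H)=0\), then \(H\) is one of the \(q^2\) fibre planes \(Q_{x,y}\), and it contains a whole parabola, i.e.\ \(q\) points of \(S\). These planes are not ``planes where the restricted polynomial vanishes identically'', so your exceptional family misses them. No choice of quadric can stop them from being triple intersections. The triple intersections are \(T(\langle P_1,P_2,P_3\rangle)^\perp\), a positive proportion of all \(\sim q^9\) planes are spanned by triples of \(S\), and \(T^{-1}(Q^\perp)\) is a uniform plane. So a random \(T\) realizes \(\Theta(q^2)\) fibre planes, giving \(K_{3,t}\) for every fixed \(t\).

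The paper's fix is simply to put all \(q^2\) planes \(Q_{x,y}\) into the deleted family \(\mathcal D_0\). This is affordable because deletion only needs \(|\mathcal D|=o(q^3)\): a point is deleted with probability \(O(|\mathcal D|q^{-3})\) and lies in \(O(q^6)\) counted copies of \(K_{b,b}\). The condition \(r\ge3\) is exactly what makes the random exceptional planes number \(O(q^9\cdot q^{-(2r+1)})=O(q^2)\) too; you assert this loss is lower order but never check it.

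The case \(\dim\pi(H)=1\) needs no special arrangement. Here \(\pi(H)\) is a line meeting the base in at most two points. Over each such point, \(H\) cuts the fibre in a line, which meets the parabola in at most two points. So \(|H\cap S|\le4\). Tangent planes give the singular conic \(N(X,Y)=0\), which has a single \(\F_q\)-point.

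You also never prove that \(S\) has no three collinear points, yet your ``general position'' claim needs this for every triple of hyperplanes. If \(P_1,P_2,P_3\in S\) lie on a line \(\ell\), their hyperplanes meet in the 3-space \(T(\ell)^\perp\), which typically carries \(\Theta(q)\) points of \(S\). The same property is what makes a coplanar \(b\)-set determine its plane. It follows from two facts: the anisotropic paraboloid has no three collinear points, and neither does a parabola. But this holds only if the random polynomial depends on the base point alone, as in \(f(x,y,x^2-\alpha y^2)\). As written, your \(f(x,y)\) involves the fibre coordinate \(y\). Then the fibres \(z=y^2+f(x,y)\) are no longer parabolas, and lines inside a fibre can meet \(S\) in more than two points.

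Finally, \(\Omega(q^9)\) good coplanar \(b\)-sets do not by themselves give \(\Omega(q^9)\) copies of \(K_{b,b}\). You must take the hyperplane side to be \(\{T(P)^\perp:P\in S\}\) for a uniformly random automorphism \(T\). A pair \((A,C)\) of good \(b\)-sets then gives a \(K_{b,b}\) exactly when \(\Pi_A=T(\Pi_C)^\perp\), an event of probability \(\sim q^{-9}\). This yields \(|\mathcal B|^2q^{-9}=\Omega(q^9)\) copies.

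Second moments are unnecessary. A first-moment bound, combined with the deterministic bound \(X_f=O(q^9)\), already selects an \(f\) with both many good \(b\)-sets and \(O(q^2)\) exceptional planes.
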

It is worth noting that $t\ge b+1$ is essential.  Thus Theorem \ref{thm:main} reaches the necessary threshold for every even \(b\ge 6\).

The rest of the paper is organized as follows. Section 2 constructs the algebraic point set
and proves the geometric input needed later, including the absence of three collinear points, the explicit plane-section bound, and the abundance of rich non-exceptional planes.
Section 3 converts this input into a projective point-hyperplane incidence graph and proves
the graph-theoretic lower bound, from which Theorem~\ref{thm:main} follows. Section 4 records the
remaining odd-case threshold question.

\section{The algebraic point set}

This section supplies the algebraic core of the construction. We define the finite-field point set, prove that it has no three collinear points, analyze its intersections with affine planes, and choose a polynomial for which there are few exceptional planes but many rich non-exceptional planes. The output is Lemma \ref{prop:algebraic-set}, which is the geometric input for the incidence construction.

Throughout this section \(q\) is an odd prime power, \(\alpha\in\mathbb F_q\) is a
nonsquare, and points of \(\mathbb F_q^5\) are written as
\((x,y,w,z,u)\). Let
\[
\pi(x,y,w,z,u)=(x,y,w)
\]
be the projection onto the first three coordinates. Fix \(r\ge 3\), and let
\(\mathcal F_r\) be the vector space of polynomials
\(f(X,Y,W)\in\mathbb F_q[X,Y,W]\) of total degree at most \(r\). For
\(f\in\mathcal F_r\), define
\[
S_f=
\{(x,y,x^2-\alpha y^2,z,z^2+f(x,y,x^2-\alpha y^2)):
x,y,z\in\mathbb F_q\}\subset \mathbb F_q^5 .
\]
The parametrization is injective in \((x,y,z)\), so \(|S_f|=q^3\). We write
\[
\Sigma:=\pi(S_f)
=
\{(x,y,x^2-\alpha y^2):x,y\in\mathbb F_q\}\subset \mathbb F_q^3
\]
for the first-three-coordinate projection of \(S_f\).

\subsection{No three collinear points}
We begin with two elementary lemmas needed to exclude collinear triples in \(S_f\). The first concerns the projection of \(S_f\) onto its first three coordinates and uses the fact that \(X^2-\alpha Y^2\) has no nonzero zero over \(\F_q\).
\begin{lemma}\label{lem:surface-lines}
The set \(\Sigma\) contains no three collinear points.
\end{lemma}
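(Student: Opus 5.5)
Since \(\Sigma\) is the graph of the quadratic function \(Q(x,y)=x^2-\alpha y^2\) over the \((x,y)\)-plane, the plan is to intersect an arbitrary affine line with \(\Sigma\) and show the resulting polynomial equation in the line parameter has at most two roots. Let \(\ell=\{p+\lambda v:\lambda\in\F_q\}\) with \(p=(p_1,p_2,p_3)\) and direction \(v=(v_1,v_2,v_3)\neq 0\). A point \(p+\lambda v\) lies on \(\Sigma\) exactly when
\[
p_3+\lambda v_3=(p_1+\lambda v_1)^2-\alpha(p_2+\lambda v_2)^2 .
\]
Note that points of \(\Sigma\) are determined by their first two coordinates, so distinct points of \(\ell\cap\Sigma\) correspond to distinct values of \(\lambda\).

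Split into two cases according to the horizontal part \((v_1,v_2)\) of the direction. If \((v_1,v_2)=(0,0)\), then \(v_3\neq0\) and \(\ell\) is a vertical line; it meets the graph \(\Sigma\) in at most one point, since \((x,y)\) is fixed along \(\ell\) and the third coordinate is then forced to be \(Q(x,y)\). If \((v_1,v_2)\neq(0,0)\), expand the equation above: the coefficient of \(\lambda^2\) is \(v_1^2-\alpha v_2^2\). This is exactly where the anisotropy enters: because \(\alpha\) is a nonsquare, \(v_1^2-\alpha v_2^2=0\) forces \(v_2=0\) (else \(\alpha=(v_1/v_2)^2\) is a square) and then \(v_1=0\), contradicting \((v_1,v_2)\ne(0,0)\). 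Hence the equation is a genuine quadratic in \(\lambda\), which has at most two roots in \(\F_q\). Thus \(|\ell\cap\Sigma|\le 2\) for every line.

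The only step with real content is the nonvanishing of the leading coefficient, i.e.\ that \(X^2-\alpha Y^2\) has no nontrivial zero over \(\F_q\); everything else is bookkeeping. I do not expect any obstacle beyond treating the vertical-line case separately, where the quadratic degenerates but the graph structure handles it directly.
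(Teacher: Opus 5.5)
Your proof is correct and matches the paper's argument: you split on whether the horizontal part $(v_1,v_2)$ of the direction vanishes, handle the vertical line directly, and otherwise use that $X^2-\alpha Y^2$ is anisotropic, so the coefficient of $\lambda^2$ is nonzero and there are at most two roots. Your side remark about distinct $\lambda$ is harmless, though it actually follows from injectivity of $\lambda\mapsto p+\lambda v$ for $v\neq 0$, not from the graph structure of $\Sigma$.
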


\begin{proof}
Let \(\lambda\) be an affine line in \(\F_q^3\), written as
\[
   (x,y,w)=(x_0,y_0,w_0)+s(u,v,w_1),\qquad s\in\F_q.
\]
The points of \(\lambda\cap\Sigma\) are the solutions of
\[
   (x_0+su)^2-\alpha (y_0+sv)^2=w_0+sw_1.
\]
If \((u,v)\ne(0,0)\), the quadratic coefficient is \(u^2-\alpha v^2\), which is nonzero because \(\alpha\) is a nonsquare. After moving all terms to one side, we obtain a nonzero quadratic polynomial in \(s\). Since a nonzero polynomial of degree two over a field has at most two roots, the equation has at most two solutions in \(s\). If \((u,v)=(0,0)\), then \(w_1\ne0\), since \((u,v,w_1)\) is a nonzero direction vector. In this case the first two coordinates are fixed, equal to \(x_0\) and \(y_0\). Therefore the intersection condition becomes
\[
   w_0+sw_1=x_0^2-\alpha y_0^2.
\]
This is a nonzero linear equation in \(s\), so it has exactly one solution. Hence \(\lambda\cap\Sigma\) has at most one point in this case. In both cases, \(\lambda\cap\Sigma\) has at most two points. Thus \(\Sigma\) contains no three collinear points.
\end{proof}

After controlling lines on the projected surface, we shall also need the same conclusion inside each fibre of \(\pi\). There, the relevant set is the parabola with last two coordinates \((z,z^2+C)\).
\begin{lemma}\label{lem:parabola-lines}
For every \(C\in\F_q\), the parabola
\[
   \{(z,z^2+C):z\in\F_q\}\subset\F_q^2
\]
contains no three collinear points.
\end{lemma}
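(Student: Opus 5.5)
The plan is to imitate the proof of Lemma~\ref{lem:surface-lines}, working in one dimension lower. Let \(\lambda\) be an affine line in \(\F_q^2\), parametrized as
\[
   (z,v)=(z_0,v_0)+s(a,c),\qquad s\in\F_q,
\]
where \((a,c)\ne(0,0)\). A point of the line lies on the parabola exactly when
\[
   v_0+sc=(z_0+sa)^2+C .
\]
We then show that this equation, viewed as a polynomial equation in \(s\), has at most two solutions. Because the parametrization of \(\lambda\) is injective, this gives \(|\lambda\cap\{(z,z^2+C)\}|\le 2\).

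We split according to whether \(a\) vanishes.

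\emph{Case \(a\ne0\).} Move everything to one side. The coefficient of \(s^2\) is \(a^2\ne0\), so we have a nonzero quadratic polynomial in \(s\). Such a polynomial has at most two roots over the field \(\F_q\).

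\emph{Case \(a=0\).} Here \(c\ne0\), so the line is vertical with \(z=z_0\) fixed. The condition becomes \(v_0+sc=z_0^2+C\). This is a nonzero linear equation in \(s\), so it has exactly one solution, and the line meets the parabola in at most one point.

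In either case the line meets the parabola in at most two points, which proves the lemma.

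An equivalent way to see it is that three distinct points \(z_1,z_2,z_3\) on the parabola are collinear iff
\[
   \det\begin{pmatrix}1&z_i&z_i^2\end{pmatrix}_{i=1}^{3}=0 .
\]
This is a Vandermonde determinant, equal to \(\prod_{i<j}(z_j-z_i)\), so it is nonzero for distinct \(z_i\).

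I expect no real obstacle. The only point needing care is the degenerate vertical direction \(a=0\), which the case split handles. Unlike Lemma~\ref{lem:surface-lines}, no nonsquare argument is needed, since the leading coefficient \(a^2\) is automatically nonzero.
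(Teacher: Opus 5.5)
Your proof is correct and follows essentially the same approach as the paper: a nonvertical line gives a nonzero quadratic equation (the paper uses the slope form \(u=az+b\) rather than a parametrization), and a vertical line meets the parabola in exactly one point. Your Vandermonde remark, where \(\det\bigl(1,z_i,z_i^2+C\bigr)=\det\bigl(1,z_i,z_i^2\bigr)\) after a column operation, is also a valid one-line alternative.
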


\begin{proof}
Let \(\ell\) be an affine line in the \((z,u)\)-plane. If \(\ell\) is nonvertical, then it has an equation
\[
   u=az+b
\]
for some \(a,b\in\F_q\). Its intersection with the parabola is given by
\[
   z^2+C=az+b.
\]
After moving all terms to one side, this is a nonzero quadratic equation in \(z\). Hence it has at most two solutions over \(\F_q\). If \(\ell\) is vertical, then it has an equation \(z=z_0\). The parabola contains exactly one point with this first coordinate, namely \((z_0,z_0^2+C)\). Thus \(\ell\) meets the parabola in at most one point. In both cases, \(\ell\) meets the parabola in at most two points. Therefore the parabola contains no three collinear points.
\end{proof}
We now combine the two lemmas to prove that \(S_f\) itself has no three collinear points. The projection to the first three coordinates excludes the case in which the three projected points are distinct, while the parabolic fibres exclude the remaining case.
\begin{lemma}\label{lem:S-no-three}
For every \(f\in\mathcal F_r\), the set \(S_f\) contains no three collinear points.
\end{lemma}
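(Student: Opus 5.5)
Suppose, for contradiction, that three distinct points \(P_1,P_2,P_3\in S_f\) lie on a common affine line \(L\subset\F_q^5\). Write \(P_i=(x_i,y_i,w_i,z_i,u_i)\) with \(w_i=x_i^2-\alpha y_i^2\) and \(u_i=z_i^2+f(x_i,y_i,w_i)\). Parametrize \(L\) as \(P_1+s\,d\) with \(d\neq 0\), and project by \(\pi\). Since \(\pi\) is linear, \(\pi(L)\) is either an affine line in \(\F_q^3\) (when \(\pi(d)\ne 0\)) or a single point (when \(\pi(d)=0\)). The argument splits along these two cases.

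In the first case, \(\pi\) restricted to \(L\) is injective, because \(s\mapsto \pi(P_1)+s\,\pi(d)\) is injective. Hence \(\pi(P_1),\pi(P_2),\pi(P_3)\) are three distinct collinear points of \(\Sigma\). This contradicts Lemma~\ref{lem:surface-lines}.

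In the second case, all three points have the same projection \((x_0,y_0,w_0)\). The quantity \(C:=f(x_0,y_0,w_0)\) is then a common constant, so the last two coordinates satisfy \((z_i,u_i)=(z_i,z_i^2+C)\). The points \(P_i\) differ only in the \((z,u)\)-coordinates, and the direction \(d\) lies in the \((z,u)\)-plane with \((d_4,d_5)\ne(0,0)\). Thus the pairs \((z_i,u_i)\) are three distinct collinear points of the parabola \(\{(z,z^2+C)\}\). This contradicts Lemma~\ref{lem:parabola-lines}.

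Two small points need care. In the second case, the three distinct points of \(L\) give distinct \((z,u)\)-pairs, since their first three coordinates coincide. In the first case, distinctness of the projections follows from injectivity of \(\pi\) on \(L\). There is no real obstacle here: the only substantive step is the dichotomy between \(\pi(d)\ne0\) and \(\pi(d)=0\), and the argument does not depend on \(f\) or \(r\).
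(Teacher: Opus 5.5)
Your proof is correct and follows essentially the same argument as the paper: you reduce to Lemma~\ref{lem:surface-lines} when the projections are distinct and to Lemma~\ref{lem:parabola-lines} inside a single fibre. The only difference is that you split directly on whether \(\pi(d)=0\), while the paper splits on whether the projections \(\pi(P_i)\) are pairwise distinct and deduces \(\pi(D)=0\) from a coincidence; the two case splits are equivalent.
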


\begin{proof}
Assume that three distinct points \(P_1,P_2,P_3\in S_f\) are collinear. Let \(\Lambda=\{P_0+sD:s\in\F_q\}\), with \(D\ne0\), be the
affine line containing them, and write \(P_i=P_0+s_iD\). Since
\[
   \pi(P_0+sD)=\pi(P_0)+s\pi(D),
\]
the image \(\pi(\Lambda)\) is a point when \(\pi(D)=0\), and otherwise is
an affine line with direction \(\pi(D)\).

For each \(i\), we have \(\pi(P_i)\in\Sigma\cap\pi(\Lambda)\). If  \(\pi(P_1),\pi(P_2),\pi(P_3)\) are pairwise  distinct, then there are three
distinct collinear points of \(\Sigma\), contradicting Lemma
\ref{lem:surface-lines}. Hence \(\pi(P_i)=\pi(P_j)\) for some \(i\ne j\).
Then
\[
   0=\pi(P_i)-\pi(P_j)=(s_i-s_j)\pi(D).
\]
It follows from \(s_i\ne s_j\) that \(\pi(D)=0\). Thus every point of
\(\Lambda\) has the same first three coordinates, and in particular
\[
   \pi(P_1)=\pi(P_2)=\pi(P_3).
\]

Let this common projection be \((x,y,x^2-\alpha y^2)\). Then, there exist pairwise distinct 
\(z_1,z_2,z_3\in\F_q\), such that 
\[
   P_i=\bigl(x,y,x^2-\alpha y^2,z_i,z_i^2+C\bigr),
   \qquad C=f(x,y,x^2-\alpha y^2).
\]
Since \(D\ne0\) and \(\pi(D)=0\), let  \(D=(0,0,0,d_4,d_5)\) with 
\((d_4,d_5)\ne(0,0)\). Therefore the projection of \(\Lambda\) to the last
two coordinates is
\[
   \{(a,b)+s(d_4,d_5):s\in\F_q\}
\]
for some \((a,b)\in\F_q^2\), hence an affine line in \(\F_q^2\). The three
distinct points \((z_i,z_i^2+C)\), \(i=1,2,3\), lie on this line and on the
parabola \(\{(z,z^2+C):z\in\F_q\}\), contradicting Lemma
\ref{lem:parabola-lines}. This proves the lemma.
\end{proof}

\subsection{Plane sections}

We now consider an affine plane \(H\subset\F_q^5\). Since the first three coordinates of \(S_f\) lie on \(\Sigma\), and the points over each point of \(\Sigma\) form a parabola in the last two coordinates, the analysis of \(H\cap S_f\) is organized according to \(\dim\pi(H)\). We begin with the cases \(\dim\pi(H)\le1\).

\begin{lemma}\label{lem:small-projection}
For every \(f\in\mathcal F_r\), the following hold.
\begin{enumerate}
\item[\textup{(i)}] If \(\dim\pi(H)=0\) and \(H\cap S_f\ne\emptyset\), then
\[
   H=Q_{x,y}:=\{(x,y,x^2-\alpha y^2,u,v):u,v\in\F_q\}
\]
for some \(x,y\in\F_q\), and \(|H\cap S_f|=q\). In particular, there are exactly \(q^2\) such planes.
\item[\textup{(ii)}] If \(\dim\pi(H)=1\), then \(|H\cap S_f|\le 4\).
\end{enumerate}
\end{lemma}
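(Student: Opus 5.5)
For (i), we use that \(\dim\pi(H)=0\) means \(\pi(H)\) is a single point \(p\). Since \(H\) is an affine plane (of dimension 2), this gives
\[
H=\{p\}\times\F_q^2 .
\]
The reason is that the direction space of \(H\) is contained in \(\ker\pi=\{0\}^3\times\F_q^2\), which has dimension 2, so the two coincide. If \(H\cap S_f\neq\emptyset\), then \(p\in\Sigma\), so \(p=(x,y,x^2-\alpha y^2)\) and \(H=Q_{x,y}\). We then count \(H\cap S_f\) directly. A point of \(S_f\) with parameters \((x',y',z)\) lies in \(Q_{x,y}\) exactly when \((x',y')=(x,y)\), since the first two coordinates determine \(x',y'\). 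This leaves one point for each \(z\), giving \(|H\cap S_f|=q\). Different pairs \((x,y)\) give different planes because the projections differ, so there are exactly \(q^2\) such planes.

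For (ii), let \(\lambda=\pi(H)\), which is an affine line in \(\F_q^3\). Every point of \(H\cap S_f\) projects into \(\lambda\cap\Sigma\), and by Lemma~\ref{lem:surface-lines} this set has at most two points. So it suffices to show that each fibre \(H\cap\pi^{-1}(p)\), for \(p\in\lambda\cap\Sigma\), contains at most two points of \(S_f\).

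Since \(\dim H=2\) and \(\dim\pi(H)=1\), the kernel of \(\pi\) restricted to the direction space of \(H\) is one-dimensional. Hence \(H\cap\pi^{-1}(p)\) is an affine line in \(\{p\}\times\F_q^2\), whose direction has the form \((0,0,0,d_4,d_5)\). Write \(p=(x,y,x^2-\alpha y^2)\) and \(C=f(p)\). The points of \(S_f\) in this fibre are \((p,z,z^2+C)\), so their intersection with the line corresponds to the intersection of an affine line in the \((z,u)\)-plane with the parabola \(\{(z,z^2+C)\}\). By Lemma~\ref{lem:parabola-lines} (or by the no-three-collinear property, Lemma~\ref{lem:S-no-three}, applied directly), this has at most two points. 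Summing over at most two fibres gives \(|H\cap S_f|\le4\).

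The only step needing care is the linear-algebra claim that each nonempty fibre of \(\pi|_H\) is a line rather than a point or a plane. This follows from rank–nullity applied to the direction space of \(H\).
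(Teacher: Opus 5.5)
Your proposal is correct and follows essentially the same route as the paper: in (i) you identify \(H\) with \(Q_{x,y}\) via its direction space lying in the two-dimensional kernel of \(\pi\) (the paper phrases this as \(H\subseteq Q_{x,y}\) with equal dimension), and in (ii) you combine \(|\pi(H)\cap\Sigma|\le 2\) from Lemma~\ref{lem:surface-lines} with the fact that each fibre of \(\pi|_H\) is an affine line meeting the parabola in at most two points, exactly as the paper does. Your side remark that Lemma~\ref{lem:S-no-three} could replace Lemma~\ref{lem:parabola-lines} for the fibre bound is also valid, since each such fibre is an affine line in \(\F_q^5\).
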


\begin{proof}
First,  assume that \(\dim\pi(H)=0\) and \(H\cap S_f\ne\emptyset\). Choose \(P\in H\cap S_f\). Then \(\pi(P)=(x,y,x^2-\alpha y^2)\) for some \(x,y\in\F_q\). Since \(\pi(H)\) consists of a single point,
\[
   H\subseteq Q_{x,y}:=
   \{(x,y,x^2-\alpha y^2,s,t):s,t\in\F_q\}.
\]
Since both \(H\) and \(Q_{x,y}\) are affine planes and \(H\subseteq Q_{x,y}\), we have
\(H=Q_{x,y}\). Moreover, \(Q_{x,y}\) is determined by its fixed first two coordinates, so there are exactly \(q^2\) such planes.

Recall that a point of
\(S_f\) has the form
\[
   (a,b,a^2-\alpha b^2,z,z^2+f(a,b,a^2-\alpha b^2)).
\]
Membership in \(Q_{x,y}\) yields that \(a=x\) and \(b=y\). Hence, with
\(C=f(x,y,x^2-\alpha y^2)\),
\[
   H\cap S_f
   =
   \{(x,y,x^2-\alpha y^2,z,z^2+C):z\in\F_q\}.
\]
This implies that \(|H\cap S_f|=q\).

Now, assume  that \(\dim\pi(H)=1\), and set \(L=\pi(H)\). Then \(L\) is an affine line, and every point of \(H\cap S_f\) projects to \(L\cap\Sigma\).  If \(H\cap S_f=\emptyset\), there is nothing to prove; otherwise fix \(A=(x,y,x^2-\alpha y^2)\in\pi(H\cap S_f)\). Let \(T\) be the direction space of \(H\). Since \(\pi(T)\) is one-dimensional, choose a basis \(U,V\) of \(T\) with \(\pi(U)\ne0\) and \(\pi(V)=0\), and write
\[
   H=\{P_0+sU+tV:s,t\in\F_q\}.
\]
Then \(L=\{\pi(P_0)+s\pi(U):s\in\F_q\}\). Since \(A\in L\), there is a
unique \(s_0\in\F_q\) such that \(A=\pi(P_0)+s_0\pi(U)\), and therefore
\[
   H\cap\pi^{-1}(A)=\{P_0+s_0U+tV:t\in\F_q\}.
\]
Thus the points of \(H\) above \(A\) form an affine line in the fibre \(\pi^{-1}(A)\).

The projection to the last two coordinates identifies the fibre \(\pi^{-1}(A)\) with \(\F_q^2\). Since \(\pi(V)=0\) and \(V\ne0\), this projection sends \(H\cap\pi^{-1}(A)\) to an affine line in \(\F_q^2\). On the other hand, with \(C=f(x,y,x^2-\alpha y^2)\),
\[
   S_f\cap\pi^{-1}(A)
   =
   \{(x,y,x^2-\alpha y^2,z,z^2+C):z\in\F_q\},
\]
which becomes the parabola \(\{(z,z^2+C):z\in\F_q\}\). By Lemma \ref{lem:parabola-lines}, each possible projection contributes at most two points to \(H\cap S_f\). Since the possible projections lie in \(L\cap\Sigma\), which has size at most two by Lemma
\ref{lem:surface-lines}, we get \(|H\cap S_f|\le4\). This completes the proof.
\end{proof}

It remains to analyze affine planes \(H\) with \(\dim\pi(H)=2\). Then \(\pi(H)\) is an affine plane in \(\F_q^3\), so there is a nonzero affine linear form \(L_H(X,Y,W)\) such that
\[
   \pi(H)=\{(X,Y,W):L_H(X,Y,W)=0\}.
\]
The map \(\pi|_H:H\to\pi(H)\) is an affine isomorphism. Hence there are affine linear functions \(g_{4,H}\) and \(g_{5,H}\) on \(\pi(H)\) such that the unique point of \(H\) above \((X,Y,W)\in\pi(H)\) is
\[
   (X,Y,W,g_{4,H}(X,Y,W),g_{5,H}(X,Y,W)).
\]
Extend \(g_{4,H}\) and \(g_{5,H}\) arbitrarily to affine linear functions on \(\F_q^3\), and put
\[
   G_H(X,Y,W)=g_{5,H}(X,Y,W)-g_{4,H}(X,Y,W)^2.
\]
Then
\begin{equation}\label{eq:plane-section}
|H\cap S_f|=\bigl|\{(x,y)\in\F_q^2:L_H(x,y,x^2-\alpha y^2)=0,
   f(x,y,x^2-\alpha y^2)=G_H(x,y,x^2-\alpha y^2)\}\bigr|.
\end{equation}
The first equation in \eqref{eq:plane-section} is either an affine line in the \((x,y)\)-plane or an affine conic. It is a line precisely when the coefficient of \(W\) in \(L_H\) is zero. Otherwise, after dividing by that coefficient, it has the form
\[
   x^2-\alpha y^2=L(x,y)
\]
with \(L\) affine linear. Such a conic is either nonsingular or, after a translation, has equation \(X^2-\alpha Y^2=0\). In the singular case it has exactly one \(\F_q\)-point, because \(\alpha\) is a nonsquare.

The following interpolation lemma is the algebraic input of the construction.

\begin{lemma}\label{lem:interpolation}
Let \(r\ge 3\) and \(q>2r+1\). Let \(\Gamma\subset \mathbb A^2_{x,y}\) be either an affine line or a nonsingular affine conic \(x^2-\alpha y^2=L(x,y)\), where \(L\) is affine linear. Let \(R_\Gamma\) be the space of functions on \(\Gamma(\mathbb F_q)\) obtained by restricting
\[
(x,y)\longmapsto f(x,y,x^2-\alpha y^2),\qquad f\in\mathcal F_r .
\]
Then \emph{(i)} in the line case, after choosing an affine parameter \(s\) on \(\Gamma\), one has \(R_\Gamma=\mathbb F_q[s]_{\le 2r}\); and \emph{(ii)} in the nonsingular conic case, one has \(\dim R_\Gamma=2r+1\). In particular, \(\dim R_\Gamma=2r+1\) in both cases. Moreover, evaluation at any \(m\le 2r+1\) distinct points of \(\Gamma(\mathbb F_q)\) gives a surjective map
\[
R_\Gamma\longrightarrow \mathbb F_q^m .
\]
\end{lemma}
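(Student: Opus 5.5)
The plan is to handle the line and conic cases separately. In each case I would identify \(R_\Gamma\) with a concrete space of functions of dimension \(2r+1\) that is known to satisfy interpolation, and then use the size bound \(q>2r+1\) to get surjectivity of evaluation.

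\emph{Line case.} Parametrize \(\Gamma\) as \((x,y)=(x_0+su,\,y_0+sv)\) with \((u,v)\ne(0,0)\). Then \(W=x^2-\alpha y^2\) restricts to a polynomial \(w(s)\) of degree exactly \(2\), since its leading coefficient \(u^2-\alpha v^2\) is nonzero (as in Lemma \ref{lem:surface-lines}).

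For the inclusion \(R_\Gamma\subseteq\F_q[s]_{\le 2r}\): a monomial \(X^iY^jW^k\) with \(i+j+k\le r\) restricts to a polynomial in \(s\) of degree at most \(i+j+2k\le 2r\).

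For the reverse inclusion, first use that \(s\) is an affine function of \(x,y\). Say \(u\ne0\), so \(s=(x-x_0)/u\). The monomials \(s^jw(s)^k\) with \(j+k\le r\) are restrictions of elements of \(\mathcal F_r\), and they have degrees \(j+2k\). These degrees cover every \(d\le 2r\): take \(k=\lfloor d/2\rfloor\) and \(j=d-2k\in\{0,1\}\), which satisfies \(j+k\le r\). A triangular argument then shows that these restrictions span \(\F_q[s]_{\le 2r}\).

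Restriction to \(\Gamma(\F_q)\) is injective on this polynomial space because \(q>2r\). Hence \(\dim R_\Gamma=2r+1\), and Lagrange interpolation gives surjectivity of evaluation at any \(m\le 2r+1\) distinct points.

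\emph{Conic case.} On \(\Gamma\) we have \(W=L(x,y)\), so \(f(x,y,W)\) restricts to the restriction of a polynomial \(g(x,y)\) of degree at most \(r\). Conversely, every \(g\in\F_q[x,y]_{\le r}\) arises in this way by taking \(f=g\). So \(R_\Gamma\) is the image of \(\F_q[x,y]_{\le r}\) under restriction to \(\Gamma(\F_q)\).

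I would pass to the projective closure. After a translation the conic reads \(x^2-\alpha y^2=c\), with \(c\ne0\) by nonsingularity. It has no points at infinity, since \(x^2-\alpha y^2=0\) has only the trivial solution. So \(\bar\Gamma\) is a smooth conic isomorphic to \(\mathbb P^1\) over \(\F_q\), and \(\Gamma(\F_q)=\bar\Gamma(\F_q)\) has exactly \(q+1\) points.

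The kernel of the restriction map on polynomials is \((x^2-\alpha y^2-c)\cdot\F_q[x,y]_{\le r-2}\), which has dimension \(\binom{r}{2}\). This gives dimension \(\binom{r+2}{2}-\binom{r}{2}=2r+1\) for the space of functions on the curve. The identification works because, under the isomorphism \(\mathbb P^1\to\bar\Gamma\), degree-\(r\) forms on \(\bar\Gamma\) become degree-\(2r\) binary forms, i.e. \(H^0(\mathcal O_{\mathbb P^1}(2r))\).

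The step I expect to need the most care is checking that restriction to the \(\F_q\)-points is still injective. A nonzero binary form of degree \(2r\) has at most \(2r\) zeros on \(\mathbb P^1(\F_q)\), and \(q+1>2r\), so injectivity holds. Equivalently, a nonzero degree-\(r\) form meets the conic in at most \(2r\) points by Bézout.

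\emph{Surjectivity in the conic case.} Given \(m\le 2r+1\) distinct points, I would argue by dimension: the kernel of evaluation consists of binary forms of degree \(2r\) vanishing at \(m\) prescribed points of \(\mathbb P^1\), which is a space of dimension \(2r+1-m\). Therefore the image has dimension \(m\), and evaluation is surjective.
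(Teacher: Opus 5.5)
Your proposal is correct and follows essentially the same route as the paper: the identical triangular family \(s^jw(s)^k\) in the line case, and in the conic case the translation to \(x^2-\alpha y^2=c\) followed by the projective closure with no \(\F_q\)-points at infinity. That closure is identified with \(\mathbb P^1\) so that restrictions become binary forms of degree \(2r\), which have at most \(2r\) zeros. The differences are minor: you obtain \(\dim R_\Gamma=2r+1\) from the count \(\binom{r+2}{2}-\binom{r}{2}\) together with B\'ezout, instead of the paper's observation that the pullbacks of \(X,Y,Z\) span the binary quadrics, so that their \(r\)-fold products give all binary forms of degree \(2r\). Also, you take for granted the existence of an \(\F_q\)-point on the conic (needed for \(\bar\Gamma\cong\mathbb P^1\) and \(|\Gamma(\F_q)|=q+1\)), which the paper supplies via surjectivity of the norm \(N_{\mathbb F_{q^2}/\mathbb F_q}\) onto \(\mathbb F_q^\ast\).
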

\begin{proof}
First suppose that \(\Gamma\) is a line. Choose an affine coordinate \(s\) on  \(\Gamma\). Then \(x=x(s)\) and \(y=y(s)\) are affine linear in \(s\), not both constant. After an affine change of coordinate, we may assume that \(s\) is the restriction of an affine linear function in \(x,y\). Put
\[
Q(s)=x(s)^2-\alpha y(s)^2 .
\]
If \((u,v)\neq (0,0)\) is the direction of \(\Gamma\), then the coefficient of \(s^2\) in \(Q(s)\) is \(u^2-\alpha v^2\), which is nonzero since \(\alpha\) is a nonsquare. Thus \(Q\) has degree two. Hence every restriction of \(f(x,y,x^2-\alpha y^2)\), with \(f\in\mathcal F_r\), has degree at most \(2r\) in \(s\).

Conversely, let \(\ell(X,Y)\) be an affine linear function whose restriction to \(\Gamma\) is \(s\). For \(0\le k\le 2r\), set
\[
(i,j)=
\begin{cases}
(0,k/2),& k\ \text{even},\\
(1,(k-1)/2),& k\ \text{odd}.
\end{cases}
\]
Then \(i+j\le r\), and \(\ell(X,Y)^iW^j\) has total degree at most \(r\). Its restriction to \(\Gamma\) is \(s^iQ(s)^j\), which has degree exactly \(k\) and nonzero leading coefficient. These restrictions therefore form, when ordered by degree, a triangular spanning family for \(\mathbb F_q[s]_{\le 2r}\). This proves (i), and the asserted surjectivity in the line case follows from ordinary Vandermonde interpolation.

Now suppose that \(\Gamma\) is a nonsingular conic. On \(\Gamma\) we have \(W=L(X,Y)\), so \(R_\Gamma\) is the space of restrictions to \(\Gamma\) of affine polynomials in \(X,Y\) of degree at most \(r\). After translating \(X,Y\) over \(\mathbb F_q\), the equation becomes
\[
X^2-\alpha Y^2=c,
\]
with \(c\neq 0\). Its projective closure is
\[
\bar\Gamma:\quad X^2-\alpha Y^2=cZ^2 .
\]
There is no \(\mathbb F_q\)-point on \(\bar\Gamma\) with \(Z=0\), since \(X^2-\alpha Y^2=0\) has no nonzero solution over \(\mathbb F_q\). On the other hand, \(\bar\Gamma\) has an affine \(\mathbb F_q\)-point: if
\(\beta^2=\alpha\) in \(\mathbb F_{q^2}\), then
\[
X^2-\alpha Y^2=N_{\mathbb F_{q^2}/\mathbb F_q}(X+\beta Y),
\]
and the norm map is onto \(\mathbb F_q^\ast\).

Thus \(\bar\Gamma\) is a nonsingular projective conic with an \(\mathbb F_q\)-point, and hence is \(\mathbb F_q\)-isomorphic to \(\mathbb P^1\). Since \(\bar\Gamma\) has no \(\mathbb F_q\)-point at infinity, this isomorphism identifies \(\mathbb P^1(\mathbb F_q)\) with \(\Gamma(\mathbb F_q)\). Under a parametrization
\[
\phi:\mathbb P^1\longrightarrow \bar\Gamma,
\]
the coordinate functions \(X,Y,Z\) pull back to three linearly independent
binary quadratic forms, hence span
\[
H^0(\mathbb P^1,\mathcal O_{\mathbb P^1}(2)).
\]
It follows that homogeneous plane polynomials of degree \(r\) pull back to all binary forms of degree \(2r\): indeed, the monomials \(s^{2r-k}t^k\), \(0\le k\le 2r\), are generated by products of \(r\) elements from \(\langle s^2,st,t^2\rangle\).

Passing from homogeneous to affine polynomials causes no loss on \(\Gamma(\mathbb F_q)\). Since \(Z\neq 0\) at every point of \(\Gamma(\mathbb F_q)\), division by \(Z^r\) is an invertible pointwise rescaling. Therefore the restrictions of affine polynomials of degree at most \(r\) give, after this nonzero rescaling, the same function space as
\[
H^0(\mathbb P^1,\mathcal O_{\mathbb P^1}(2r)).
\]
A nonzero binary form of degree \(2r\) has at most \(2r\) zeros on \(\mathbb P^1\); since \(q>2r+1\), it cannot vanish on all of \(\mathbb P^1(\mathbb F_q)\). Hence this function space has dimension \(2r+1\).

Finally, let \(P_1,\ldots,P_m\) be distinct points of \(\Gamma(\mathbb F_q)\), with \(m\le 2r+1\). After the parametrization by \(\mathbb P^1\), and after the same nonzero pointwise rescaling, evaluation at these points is evaluation of binary forms of degree \(2r\) at \(m\) distinct points of \(\mathbb P^1(\mathbb F_q)\). In an affine coordinate this is ordinary Vandermonde interpolation; if one of the points is infinity, prescribe the
leading coefficient and interpolate the remaining values by the lower coefficients. Thus the evaluation map is surjective. This proves (ii) and the
lemma.
\end{proof}
We shall use the following immediate consequence. Since the restriction map \(\mathcal F_r\to\mathcal R_\Gamma\) is linear and surjective, the restriction of a uniformly chosen \(f\in\mathcal F_r\) is uniformly distributed on \(\mathcal R_\Gamma\). Hence agreement with any fixed element of \(\mathcal R_\Gamma\) has probability \(q^{-(2r+1)}\), and prescribed values at any \(m\le 2r+1\) distinct points of \(\Gamma(\F_q)\) have
probability \(q^{-m}\). For \(f\in\mathcal F_r\), let \(\mathcal E_f\) be the family of affine planes \(H\) with \(\dim\pi(H)=2\) for which the first equation in \eqref{eq:plane-section} defines a line or a nonsingular conic, and
\begin{equation}\label{eq:identity}
   f(x,y,x^2-\alpha y^2)=G_H(x,y,x^2-\alpha y^2)
\end{equation}
holds identically in the restriction space of Lemma \ref{lem:interpolation}. Equivalently, in the line case the two sides agree as polynomials in an affine parameter, and in the nonsingular conic case their pullbacks to \(\mathbb P^1\) agree as sections of \(\mathcal O_{\mathbb P^1}(2r)\) after homogenization. In particular, \eqref{eq:identity} holds at every \(\F_q\)-point of the curve. Singular conics are not included in \(\mathcal E_f\), since they have only one \(\F_q\)-point.

This definition is compatible with Lemma \ref{lem:interpolation}. In the line case, \(G_H(x,y,x^2-\alpha y^2)\) has degree at most four in the line parameter, and thus belongs to the restriction space because \(r\ge 3\). In the conic case, \(W=L(X,Y)\) on the conic, so \(G_H\) restricts to a polynomial of degree at most two in \(x,y\), again belonging to the restriction space.

\begin{lemma}\label{lem:section-bound}
Let \(f\in\mathcal F_r\). If an affine plane \(H\subset\F_q^5\) is neither in \(\mathcal E_f\) nor equal to one of the planes \(Q_{x,y}\) from Lemma \ref{lem:small-projection}\textup{(i)}, then
\[
   |H\cap S_f|\le 2r.
\]
\end{lemma}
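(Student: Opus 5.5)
We argue by cases on \(\dim\pi(H)\). If \(\dim\pi(H)=0\) and \(H\cap S_f\neq\emptyset\), Lemma \ref{lem:small-projection}(i) forces \(H=Q_{x,y}\), which is excluded. So either \(H\cap S_f=\emptyset\) or \(\dim\pi(H)\ge 1\). If \(\dim\pi(H)=1\), Lemma \ref{lem:small-projection}(ii) gives \(|H\cap S_f|\le 4\le 2r\), using \(r\ge 3\). It therefore remains to treat \(\dim\pi(H)=2\). Here we use the count \eqref{eq:plane-section}: \(|H\cap S_f|\) equals the number of \(\F_q\)-points \((x,y)\) of the curve \(\Gamma=\{L_H(x,y,x^2-\alpha y^2)=0\}\) at which \(f(x,y,x^2-\alpha y^2)=G_H(x,y,x^2-\alpha y^2)\). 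The map \(\pi|_H\) is injective, and each point of \(\Sigma\) determines its \((x,y)\).

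We split according to the type of \(\Gamma\). If \(\Gamma\) is a singular conic, it has exactly one \(\F_q\)-point, so \(|H\cap S_f|\le 1\). Otherwise \(\Gamma\) is an affine line or a nonsingular conic, and since \(H\notin\mathcal E_f\), the function \(h=f-G_H\), restricted to \(\Gamma\), is a nonzero element of \(R_\Gamma\). By the remarks after Lemma \ref{lem:interpolation}, \(G_H\) does lie in \(R_\Gamma\).

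In the line case, Lemma \ref{lem:interpolation}(i) identifies \(h\) with a nonzero polynomial of degree at most \(2r\) in the affine parameter \(s\). Such a polynomial has at most \(2r\) roots, and distinct points of \(\Gamma(\F_q)\) correspond to distinct values of \(s\).

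In the conic case, we use the parametrization \(\phi:\mathbb P^1\to\bar\Gamma\) from the proof of Lemma \ref{lem:interpolation}(ii). This is a bijection \(\mathbb P^1(\F_q)\to\Gamma(\F_q)\), since \(\bar\Gamma\) has no \(\F_q\)-points at infinity. Fix a representative \(\tilde h\) of \(h\) as an affine polynomial of degree at most \(r\) in \(X,Y\). Homogenize it to degree \(r\) and pull back to get a binary form \(B\) of degree \(2r\). Since \(Z\ne 0\) at every point of \(\Gamma(\F_q)\), we have \(\tilde h(P)=0\) exactly when \(B\) vanishes at \(\phi^{-1}(P)\).

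The main point to check is that \(B\ne 0\). This is exactly how nonmembership in \(\mathcal E_f\) was defined: \eqref{eq:identity} fails as an identity of sections of \(\mathcal O_{\mathbb P^1}(2r)\). Equivalently, Lemma \ref{lem:interpolation} shows that the restriction space is faithfully the space of binary forms of degree \(2r\) when \(q>2r+1\), so a nonzero function gives a nonzero form. A nonzero binary form of degree \(2r\) has at most \(2r\) zeros in \(\mathbb P^1(\F_q)\). Hence \(|H\cap S_f|\le 2r\).

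Combining all cases gives \(|H\cap S_f|\le \max\{4,1,2r\}=2r\). This uses the standing assumption \(q>2r+1\) from Lemma \ref{lem:interpolation}, which we assume is in force here.
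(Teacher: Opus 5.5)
Your proof is correct and follows the paper's argument essentially step for step. You split on \(\dim\pi(H)\), use Lemma~\ref{lem:small-projection} for dimensions \(0\) and \(1\), and for \(\dim\pi(H)=2\) use the line/nonsingular-conic/singular-conic trichotomy, where \(H\notin\mathcal E_f\) gives a nonzero polynomial in \(s\) (resp.\ a nonzero binary form of degree \(2r\)) and hence at most \(2r\) zeros.

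Your flagging of \(q>2r+1\) matches the paper's implicit use of Lemma~\ref{lem:interpolation}(ii). The bound itself does not really need that hypothesis, though: reading \(H\notin\mathcal E_f\) as ``\(h\) is not identically zero on \(\Gamma(\F_q)\)'' already forces your form \(B\) to be nonzero.
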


\begin{proof}
If \(\dim\pi(H)=0\), then either \(H\cap S_f=\emptyset\), or Lemma \ref{lem:small-projection}(i) gives \(H=Q_{x,y}\) for some \(x,y\in\F_q\). The latter case is excluded. If \(\dim\pi(H)=1\), then Lemma \ref{lem:small-projection}(ii) gives \(|H\cap S_f|\le 4\le 2r\).

Assume \(\dim\pi(H)=2\). By \eqref{eq:plane-section}, the intersection is governed by a line or conic \(\Gamma\) in the \((x,y)\)-plane together with the equation \eqref{eq:identity}. In the line case, the difference between the two sides is a univariate polynomial of degree at most \(2r\). Since \(H\notin\mathcal E_f\), the difference is not identically zero, and hence has at most \(2r\) zeros. In the nonsingular conic case, Lemma \ref{lem:interpolation}(ii) identifies the relevant restrictions with degree-\(2r\) functions on \(\mathbb P^1\). Again \(H\notin\mathcal E_f\) means that the difference is nonzero in that space, and therefore it has at most \(2r\) zeros on \(\Gamma(\F_q)\). In the singular conic case there is at most one \(\F_q\)-point. These cases exhaust all possibilities.
\end{proof}

\begin{lemma}\label{lem:expected-exceptional}
If \(f\) is chosen uniformly from \(\mathcal F_r\), then
\[
   \mathbb E[|\mathcal E_f|]=O_r(q^2).
\]
\end{lemma}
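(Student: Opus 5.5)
By linearity of expectation, I would write
\[
\mathbb E[|\mathcal E_f|]=\sum_H \Pr[H\in\mathcal E_f],
\]
where the sum runs over all affine planes \(H\subset\F_q^5\) with \(\dim\pi(H)=2\) whose associated curve \(\Gamma_H=\{L_H(x,y,x^2-\alpha y^2)=0\}\) is a line or a nonsingular conic. For each such \(H\), the event \(H\in\mathcal E_f\) says that the restriction of \(f(x,y,x^2-\alpha y^2)\) to \(\Gamma_H\) equals the fixed element of \(R_{\Gamma_H}\) given by the restriction of \(G_H\). Membership of \(G_H\) in \(R_{\Gamma_H}\) was checked just before Lemma~\ref{lem:section-bound}. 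Since \(q>2r+1\), the consequence of Lemma~\ref{lem:interpolation} stated above applies: the restriction of a uniform \(f\in\mathcal F_r\) is uniform on \(R_{\Gamma_H}\), which has dimension \(2r+1\). Hence \(\Pr[H\in\mathcal E_f]=q^{-(2r+1)}\) exactly. For fixed \(r\) and small \(q\) the bound is trivial, so I may assume \(q>2r+1\).

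The remaining work is to count the planes. An affine plane \(H\) with \(\dim\pi(H)=2\) is determined by two pieces of data. The first is its projection \(\pi(H)\), an affine plane in \(\F_q^3\), and there are \(O(q^3)\) of these. The second is the pair of affine linear functions \(g_{4,H},g_{5,H}\) on \(\pi(H)\), each of which has \(3\) free coefficients, giving \(q^6\) choices. The total is therefore \(O(q^9)\), and the naive bound is \(O(q^9)\cdot q^{-(2r+1)}=O(q^{8-2r})\). For \(r\ge 3\) this is \(O(q^2)\), indeed \(O(1/q^{2})\).

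The subtle point is whether the count of \(H\) should instead be organized so that distinct planes giving the same identity are not over- or under-counted. Because each \(H\) corresponds bijectively to a triple \((\pi(H),g_{4,H},g_{5,H})\), the union bound above is honest; no grouping is needed. Should a sharper bound be desired, one could group planes by the pair \((\Gamma_H,\,G_H|_{\Gamma_H})\). The probability then depends only on this pair, and the number of pairs is at most \(O(q^3)\cdot q^{2r+1}\) only when the fixed function is arbitrary. That refinement is not needed here.

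I would therefore conclude \(\mathbb E[|\mathcal E_f|]\le C q^{9}q^{-(2r+1)}=O_r(q^2)\), using \(r\ge3\) in the final step. The only step that needs care is confirming that the probability is exactly \(q^{-(2r+1)}\) in the conic case. This rests on surjectivity of \(\mathcal F_r\to R_\Gamma\), which holds by the definition of \(R_\Gamma\) as the image, together with \(\dim R_\Gamma=2r+1\), which is Lemma~\ref{lem:interpolation}(ii).
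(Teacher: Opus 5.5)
Your argument is correct and is essentially the paper's: linearity of expectation, the exact probability $q^{-(2r+1)}$ from Lemma~\ref{lem:interpolation}, and an $O(q^9)$ count of planes with $\dim\pi(H)=2$. The paper also counts the $O(q^8)$ line-case planes separately, which your single count shows is unnecessary, and your remark disposing of the case $q\le 2r+1$ is a small improvement in care. One arithmetic slip: for $r=3$ one has $q^{8-2r}=q^2$, so your aside ``indeed $O(1/q^{2})$'' is false for $r\in\{3,4\}$, although the required $O_r(q^2)$ bound is unaffected.
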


\begin{proof}
There are \(O(q^8)\) affine planes \(H\) with \(\dim\pi(H)=2\) for which the curve in \eqref{eq:plane-section} is a line. Indeed, the projected plane is the inverse image, under \((X,Y,W)\mapsto(X,Y)\), of an affine line in the \((X,Y)\)-plane, giving \(O(q^2)\) choices. Once this projected plane is fixed, \(H\) is the graph of an affine map from a two-dimensional affine space to the two-dimensional kernel of \(\pi\), giving \(q^6\) choices. For each such \(H\), Lemma \ref{lem:interpolation}(i) gives probability \(q^{-(2r+1)}\) that \(H\in\mathcal E_f\). These planes contribute \(O(q^{7-2r})\) in expectation.

There are \(O(q^9)\) remaining affine planes with \(\dim\pi(H)=2\): an affine plane in \(\F_q^5\) is determined by a two-dimensional direction subspace, of which there are \(O(q^6)\), and one of \(q^3\) parallel translates. If the corresponding conic is nonsingular, Lemma \ref{lem:interpolation}(ii) gives probability \(q^{-(2r+1)}\) that \(H\in\mathcal E_f\); if it is singular, \(H\) is not included in \(\mathcal E_f\). These planes contribute \(O(q^{8-2r})\). Since \(r\ge 3\), the sum of the two contributions is \(O_r(q^2)\).
\end{proof}

For an affine plane \(H\) with \(\dim\pi(H)=2\), write
\[
   \Gamma_H=\{(x,y)\in\F_q^2:L_H(x,y,x^2-\alpha y^2)=0\}.
\]

\begin{lemma}\label{lem:many-large-sections}
There is an absolute constant \(c>0\) such that, for all sufficiently large \(q\), at least \(cq^9\) affine planes \(H\) with \(\dim\pi(H)=2\) satisfy
\[
   |\Gamma_H|\ge q/2.
\]
\end{lemma}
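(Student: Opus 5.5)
Count planes \(H\) with \(\dim\pi(H)=2\) according to their projected plane \(\pi(H)\subset\F_q^3\) and the lift data \(g_{4,H},g_{5,H}\).

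\textbf{Step 1: reduce to planes in \(\F_q^3\).} Fix an affine plane \(P\subset\F_q^3\). The planes \(H\subset\F_q^5\) with \(\pi(H)=P\) are exactly the graphs of affine maps \(P\to\F_q^2\). There are \(q^6\) of these, since each of the two coordinates is an affine function on \(P\), giving \(q^3\) choices per coordinate. The curve \(\Gamma_H\) depends only on \(P\), via \(L_H=L_P\). So it suffices to exhibit at least \(c'q^3\) affine planes \(P\subset\F_q^3\) with \(|\Gamma_P|\ge q/2\); the statement then follows with \(c=c'\).

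\textbf{Step 2: choose the planes.} Take \(P\) of the form \(W=aX+bY+e\), with \(a,b,e\in\F_q\); there are \(q^3\) such planes. Then
\[
   \Gamma_P=\{(x,y):x^2-\alpha y^2=ax+by+e\}.
\]
Completing the square turns this into \(X^2-\alpha Y^2=c\), where \(X=x-a/2\), \(Y=y+b/(2\alpha)\), and
\[
   c=e+a^2/4-b^2/(4\alpha).
\]
For fixed \((a,b)\), the map \(e\mapsto c\) is a bijection of \(\F_q\). So for all \((a,b)\) and all but one value of \(e\), we get \(c\ne0\).

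\textbf{Step 3: count points on the conic.} For \(c\ne0\), the number of solutions of \(X^2-\alpha Y^2=c\) equals \(|\{\xi\in\F_{q^2}:N(\xi)=c\}|\), using the norm identity from the proof of Lemma~\ref{lem:interpolation}. Since the norm \(\F_{q^2}^\ast\to\F_q^\ast\) is a surjective homomorphism, every fibre has exactly \(q+1\) elements. This agrees with \(\bar\Gamma\cong\mathbb P^1\) having no points at infinity, which gives \(|\Gamma(\F_q)|=q+1\). Hence \(|\Gamma_P|=q+1\ge q/2\) for at least \(q^2(q-1)\ge q^3/2\) planes \(P\).

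\textbf{Step 4: conclude.} Multiplying by the \(q^6\) lifts from Step~1 gives at least \(q^9/2\) planes \(H\). So \(c=1/2\) works, and in fact for every odd \(q\).

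The only point needing care is the exact conic count in Step~3. That is handled by the surjectivity of the norm together with the fibre-size argument, so I expect no real obstacle.
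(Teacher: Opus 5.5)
Your proof is correct, and it takes a different route from the paper.

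\textbf{The paper's argument.} It double counts. It sets \(Z=\pi^{-1}(\Sigma)\), so \(|Z|=q^4\), and uses \(|H\cap Z|=|\Gamma_H|\) whenever \(\dim\pi(H)=2\). Counting incidences between points of \(Z\) and such planes gives \(\sum_H|\Gamma_H|=(1+o(1))q^{10}\) over the \((1+o(1))q^9\) relevant planes. A reverse-Markov step with the crude bound \(|\Gamma_H|\le 2q\) then yields \(cq^9\) planes with \(|\Gamma_H|\ge q/2\), for any fixed \(c<1/4\) and \(q\) large.

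\textbf{Your argument.} You fibre the planes over their projection \(P=\pi(H)\): each \(P\) has exactly \(q^6\) lifts, and \(\Gamma_H\) depends only on \(P\). You then restrict to the \(q^3\) non-vertical planes \(W=aX+bY+e\). Whenever the completed-square constant is nonzero, you compute \(|\Gamma_P|=q+1\) exactly, because the norm \(\F_{q^2}^\ast\to\F_q^\ast\) is a surjective homomorphism with kernel of order \(q+1\).

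\textbf{What each buys.} Your route gives an explicit count, \(q^8(q-1)\ge q^9/2\) planes, valid for every odd \(q\) rather than only asymptotically. It also shows directly that the chosen sections are nonsingular conics. That makes automatic the later remark in the proof of Lemma~\ref{prop:algebraic-set} that \(\Gamma_H\) is not a singular conic. The paper's averaging needs no exact point count on conics, only \(|Z|\), the number of planes through a point, and \(|\Gamma_H|\le 2q\). It is therefore less tied to the specific shape of the surface, at the cost of an inexplicit constant and a largeness assumption on \(q\).
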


\begin{proof}
Let
\[
   Z=\{(x,y,x^2-\alpha y^2,u,v):x,y,u,v\in\F_q\}\subset\F_q^5.
\]
Then \(|Z|=q^4\). If \(\dim\pi(H)=2\), the map \(\pi|_H\) is an affine isomorphism from \(H\) to its projected plane. Hence points of \(H\cap Z\) are in bijection with pairs \((x,y)\in\Gamma_H\), and
\begin{equation}\label{eq:Z-section}
   |H\cap Z|=|\Gamma_H|.
\end{equation}

We count incidences \((z,H)\) with \(z\in Z\), \(z\in H\), and \(\dim\pi(H)=2\). Fix \(z\in Z\). An affine plane through \(z\) is determined by its two-dimensional direction subspace. The number of such direction subspaces in \(\F_q^5\) is \((1+o(1))q^6\). Among them, the direction subspaces whose image under the linear part of \(\pi\) has dimension less than two form only \(O(q^4)\) choices. Indeed, image dimension zero gives one choice, and image dimension one is obtained by first choosing a line in \(\F_q^3\), in \(O(q^2)\) ways, and then choosing a two-dimensional subspace inside its three-dimensional inverse image, in \(O(q^2)\) ways. Thus each \(z\in Z\) is incident with \((1+o(1))q^6\) affine planes satisfying \(\dim\pi(H)=2\). Using \eqref{eq:Z-section}, we obtain
\[
   \sum_H |\Gamma_H|=(1+o(1))q^{10},
\]
where the sum is over all affine planes \(H\) with \(\dim\pi(H)=2\).

The number of such planes is at most \((1+o(1))q^9\). Also \(|\Gamma_H|\le 2q\) for every \(H\), since \(\Gamma_H\) is either an affine line or a plane conic. Let \(M\) be the number of summands with \(|\Gamma_H|\ge q/2\). If \(M<cq^9\), then
\[
   \sum_H |\Gamma_H|\le 2qM+(q/2)(1+o(1))q^9
      \le (2c+1/2+o(1))q^{10}.
\]
Choosing any fixed \(c<1/4\) gives a contradiction for all sufficiently large \(q\).
\end{proof}

\begin{lemma}\label{prop:algebraic-set}
Let \(b\ge 4\), and put \(r=\max\{3,\lceil b/2\rceil\}\). There are constants \(C=C_b\) and \(\delta=\delta_b>0\) such that, for every sufficiently large odd prime power \(q\), there is a set \(S\subset\F_q^5\) and a family \(\mathcal D_0\) of affine planes satisfying:
\begin{enumerate}
\item[\textup{(i)}] \(|S|=q^3\);
\item[\textup{(ii)}] no three points of \(S\) are collinear;
\item[\textup{(iii)}] \(|\mathcal D_0|\le Cq^2\);
\item[\textup{(iv)}] every affine plane not in \(\mathcal D_0\) meets \(S\) in at most \(2r\) points;
\item[\textup{(v)}] at least \(\delta q^9\) different \(b\)-element subsets of \(S\) are contained in affine planes not belonging to \(\mathcal D_0\).
\end{enumerate}
\end{lemma}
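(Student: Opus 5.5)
We take \(S=S_f\) for a suitably chosen \(f\in\mathcal F_r\), and
\[
\mathcal D_0=\mathcal E_f\cup\{Q_{x,y}:x,y\in\F_q\}.
\]
Since \(r=\max\{3,\lceil b/2\rceil\}\ge 3\) and \(q\) is large, we have \(q>2r+1\), so all earlier lemmas apply.

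Items (i) and (ii) hold for every \(f\). Indeed, \(|S_f|=q^3\) by injectivity of the parametrization, and Lemma \ref{lem:S-no-three} rules out collinear triples. Item (iv) is Lemma \ref{lem:section-bound}, applied to every plane outside \(\mathcal D_0\). Note that \(2r\ge b\), which is why this \(r\) was chosen. Thus the work is to choose \(f\) so that (iii) and (v) hold at the same time. We do this by taking \(f\) uniformly at random and using expectations together with Markov's inequality.

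For (iii), Lemma \ref{lem:expected-exceptional} gives \(\mathbb E|\mathcal E_f|\le C_0q^2\), with \(C_0\) depending on \(b\). By Markov's inequality, \(|\mathcal E_f|\le 4C_0q^2\) with probability at least \(3/4\). Adding the \(q^2\) planes \(Q_{x,y}\) gives \(|\mathcal D_0|\le Cq^2\).

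For (v), let \(\mathcal H\) be the family of at least \(cq^9\) planes from Lemma \ref{lem:many-large-sections}. These have \(\dim\pi(H)=2\) and \(|\Gamma_H|\ge q/2\); this forces \(\Gamma_H\) to be a line or a nonsingular conic, since a singular conic has only one point. For \(H\in\mathcal H\), consider
\[
N_H=\#\{b\text{-subsets of }\Gamma_H(\F_q)\text{ all of whose points lie in }S_f\}.
\]
Here a point \((x,y)\in\Gamma_H\) lies in \(S_f\cap H\) exactly when \(f=G_H\) at that point. Since \(b\le 2r+1\), Lemma \ref{lem:interpolation} gives probability exactly \(q^{-b}\) for each fixed \(b\)-subset. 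Hence
\[
\mathbb E N_H=\binom{|\Gamma_H|}{b}q^{-b}\ge c_b>0,
\qquad
\mathbb E\sum_{H\in\mathcal H}N_H\ge c\,c_b\,q^9.
\]

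The main obstacle is twofold: we must discard those \(H\in\mathcal E_f\), and we must avoid double counting. For distinctness there is no issue. A \(b\)-subset of \(S\) with \(b\ge 4>3\) points lies in at most one affine plane, because by (ii) it contains three non-collinear points, which span a unique plane. So distinct pairs \((H,B)\) give distinct subsets. To remove exceptional planes, note that \(H\notin\mathcal E_f\) forces \(N_H\le\binom{2r}{b}\), since the section has at most \(2r\) points. So it is natural to count
\[
\sum_{H\in\mathcal H}\min\Bigl(N_H,\tbinom{2r}{b}\Bigr)\mathbbm 1[H\notin\mathcal E_f].
\]
This sum is at least \(\sum_H\min(N_H,\binom{2r}{b})-\binom{2r}{b}|\mathcal E_f|\), and the subtracted term is \(O(q^2)\) in expectation, which is negligible.

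To lower-bound \(\mathbb E\min(N_H,K)\) with \(K=\binom{2r}{b}\), it suffices to show \(\Pr[N_H\ge 1]\ge c'\). We plan a second-moment argument, or more simply an argument via \(\min(N,K)\ge \mathbbm 1[N\ge1]\). Pairwise events for two \(b\)-subsets whose union has at most \(2r+1\) points are independent, with probability \(q^{-|B\cup B'|}\). When \(b\le r\) this covers all pairs, and Paley–Zygmund gives \(\Pr[N_H\ge1]\ge(\mathbb EN_H)^2/\mathbb EN_H^2\ge c'\). When \(b>r\), unions can exceed \(2r+1\). In that case we first bound the probability that the union lies in \(S_f\) by the probability that some \((2r+1)\)-subset does, which is \(q^{-(2r+1)}\), and this still yields \(\mathbb E N_H^2=O_b(1)\). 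Then \(\mathbb E\) of the count in (v) is at least \(c''q^9-O(q^2)\).

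Since this count is always \(O(q^9)\), a reverse Markov inequality shows that it is at least \(c''q^9/2\) with probability bounded below. Intersecting this event with the probability-\(3/4\) event from (iii), after adjusting constants so the two probabilities sum to more than \(1\), yields a single \(f\) satisfying all five items.
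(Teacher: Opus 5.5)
There is a genuine gap, and it sits in your lower bound for $\mathbb E\min(N_H,K)$ via Paley--Zygmund.

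\textbf{The case $b\le r$ is vacuous.} For $b\ge 4$ one always has $r=\max\{3,\lceil b/2\rceil\}<b$, so you are always in the case $b>r$.

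\textbf{The second-moment claim is false.} In the case $b>r$, the claim $\mathbb E N_H^2=O_b(1)$ does not hold. On the event $H\in\mathcal E_f$, which has probability $q^{-(2r+1)}$, every point of $\Gamma_H(\F_q)$ gives a point of $S_f$. Then $N_H=\binom{|\Gamma_H|}{b}=\Omega_b(q^b)$, and hence $\mathbb E N_H^2=\Omega_b(q^{2b-2r-1})$ with $2b-2r-1\ge 1$. In your own bookkeeping, pairs $(B,B')$ with $|B\cup B'|=m>2r+1$ number $\Theta(q^m)$. Each has probability exactly $q^{-(2r+1)}$, because more than $2r$ points of $\Gamma_H$ in $S_f$ force $H\in\mathcal E_f$. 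These terms therefore contribute $\Theta(q^{m-2r-1})$, with $m$ ranging up to $2b\ge 2r+2$. Paley--Zygmund then only gives $\Pr[N_H\ge 1]=\Omega(q^{-(2b-2r-1)})$, which is useless. The heavy tail of $N_H$ comes precisely from the exceptional event you are trying to discard.

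\textbf{The missing observation.} The paper uses the fact that the exceptional event is contained in every point event. For each fixed $b$-subset $B\subseteq\Gamma_H(\F_q)$, $H\in\mathcal E_f$ implies $B\subset S_f$, so
$\Pr[B\subset S_f,\ H\notin\mathcal E_f]=q^{-b}-q^{-(2r+1)}\ge\tfrac12 q^{-b}$.
Linearity of expectation then gives $\mathbb E\bigl[N_H\mathbbm{1}[H\notin\mathcal E_f]\bigr]=\Omega_b(1)$ directly. The truncation $\min(N_H,K)$, the subtraction of $K|\mathcal E_f|$, and any second-moment argument all become unnecessary. If you want to keep Paley--Zygmund, you must apply it to the bounded variable $N_H\mathbbm{1}[H\notin\mathcal E_f]\le K$. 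Its mean, however, is exactly the computation above, after which Paley--Zygmund adds nothing.

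\textbf{The rest is sound.} With this replacement, the remaining steps work as you wrote them:
\begin{itemize}
\item the uniqueness of the plane spanned by a coplanar $b$-subset;
\item the deterministic $O(q^9)$ upper bound on the count;
\item combining reverse Markov for (v) with Markov for (iii) at a suitably enlarged constant $C$.
\end{itemize}
The paper packages this last step as choosing $f$ with $X_f-\lambda q^7M_f\ge(c_1/2)q^9$.
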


\begin{proof}
Choose \(f\in\mathcal F_r\) uniformly and set \(S=S_f\). By Lemma \ref{lem:S-no-three}, the set \(S_f\) always satisfies (ii). Let \(M_f=|\mathcal E_f|\). By Lemma \ref{lem:expected-exceptional},
\[
   \mathbb E[M_f] \le C_0q^2
\]
for a constant \(C_0=C_0(b)\).

We next count coplanar \(b\)-element subsets. Let \(H\) be an affine plane with \(\dim\pi(H)=2\) and \(|\Gamma_H|\ge q/2\). For \((x,y)\in\Gamma_H\), let
\[
   p_H(x,y)=(x,y,x^2-\alpha y^2,
      g_{4,H}(x,y,x^2-\alpha y^2),g_{5,H}(x,y,x^2-\alpha y^2))\in H,
\]
and put
\[
   W_H=\{p_H(x,y):(x,y)\in\Gamma_H\}.
\]
Then \(|W_H|=|\Gamma_H|\ge q/2\). A point \(p_H(x,y)\) belongs to \(S_f\) if and only if
\begin{equation}\label{eq:point-condition}
   f(x,y,x^2-\alpha y^2)=G_H(x,y,x^2-\alpha y^2).
\end{equation}
Since \(|\Gamma_H|\ge q/2\), the curve \(\Gamma_H\) is not a singular conic for sufficiently large \(q\). Thus \(\Gamma_H\) is either a line or a nonsingular conic, and Lemma \ref{lem:interpolation} applies.

Fix \(b\) distinct points of \(W_H\), corresponding to distinct points \((x_1,y_1),\dots,(x_b,y_b)\) of \(\Gamma_H\). The condition that all these points lie in \(S_f\) is the system of \(b\) linear equations
\[
   f(x_i,y_i,x_i^2-\alpha y_i^2)=G_H(x_i,y_i,x_i^2-\alpha y_i^2),
   \qquad 1\le i\le b.
\]
Because \(b\le 2r\), Lemma \ref{lem:interpolation} gives probability \(q^{-b}\) for this event. The event \(H\in\mathcal E_f\) is the stronger event that the same equality holds identically in the restriction space, and hence it implies that all chosen points lie in \(S_f\). Lemma \ref{lem:interpolation} gives probability \(q^{-(2r+1)}\) for this identity. Therefore
\[
   \Pr[\text{the chosen }b\text{ points lie in }S_f\text{ and }H\notin\mathcal E_f]
      =q^{-b}-q^{-(2r+1)}\ge \tfrac12 q^{-b}
\]
for all sufficiently large \(q\).

Because \(S_f\) contains no three collinear points and \(b\ge 4\), any \(b\)-element subset of \(S_f\) contained in a plane determines that plane uniquely: any three of its points are noncollinear and span the plane. Thus the following count of pairs \((H,B)\) is also a count of distinct \(b\)-element subsets. By Lemma \ref{lem:many-large-sections}, there are at least \(cq^9\) affine planes \(H\) with \(\dim\pi(H)=2\) and \(|\Gamma_H|\ge q/2\). Each such plane has \(\Omega_b(q^b)\) choices of a \(b\)-element subset of \(W_H\). If \(X_f\) denotes the number of \(b\)-element subsets of \(S_f\) that lie on an affine plane \(H\notin\mathcal E_f\) with \(\dim\pi(H)=2\) and \(|\Gamma_H|\ge q/2\), the preceding paragraph gives
\[
   \mathbb E[ X_f] \ge c_1q^9
\]
for a constant \(c_1=c_1(b)>0\).

There are \(O(q^9)\) affine planes with \(\dim\pi(H)=2\). By Lemma \ref{lem:section-bound}, each plane counted by \(X_f\) contains at most \(2r\) points of \(S_f\), and hence only \(O_b(1)\) relevant \(b\)-subsets. Thus
\[
   X_f\le C_1q^9
\]
deterministically, for some constant \(C_1=C_1(b)\). Choose \(\lambda>0\) so small that \(c_1-\lambda C_0>c_1/2\). Then
\[
   \mathbb E\bigl[X_f-\lambda q^7M_f\bigr]
      \ge (c_1/2)q^9.
\]
Hence there exists a polynomial \(f\) such that
\[
   X_f-\lambda q^7M_f\ge (c_1/2)q^9.
\]
For this \(f\), one has \(X_f\ge(c_1/2)q^9\), and the deterministic upper bound on \(X_f\) gives
\[
   \lambda q^7M_f\le X_f-(c_1/2)q^9\le C_1q^9.
\]
Thus \(M_f\le C_2q^2\) for a constant \(C_2=C_2(b)\).

Fix this polynomial \(f\). Let \(\mathcal D_0\) consist of the \(q^2\) affine planes \(Q_{x,y}\) from Lemma \ref{lem:small-projection}(i), together with the planes in \(\mathcal E_f\). Then \(|\mathcal D_0|=O_b(q^2)\). Lemma \ref{lem:section-bound} shows that every affine plane outside \(\mathcal D_0\) contains at most \(2r\) points of \(S_f\). Finally, the subsets counted by \(X_f\) give at least \(\delta q^9\) coplanar \(b\)-element subsets on planes outside \(\mathcal D_0\), after decreasing \(\delta\) if necessary.
\end{proof}

\section{The incidence graph}

This section turns the geometric data from Lemma~\ref{prop:algebraic-set} into the graph-theoretic lower bound. We embed the point set into projective space, use a polarity to convert points into hyperplanes, delete the configurations forced by exceptional planes, and then prove that the resulting incidence graph is \(K_{3,2r+1}\)-free while still containing \(\Omega_b(q^9)\) copies of \(K_{b,b}\).

We pass to projective space using the standard affine chart \(\F_q^5\subset\PG(5,q)\), and we again write \(S\) for the image of the affine set. Let \(\mathcal D_0\) be the affine family supplied by Lemma~\ref{prop:algebraic-set}, and let \(\mathcal D\) be the set of projective closures of the planes in \(\mathcal D_0\). Then \(|\mathcal D|=O_b(q^2)\).

The projective version has the following properties. First, no three points of \(S\) are collinear in \(\PG(5,q)\): if three affine points were collinear projectively, their projective line would not be contained in the hyperplane at infinity, and its affine part would be an affine line containing the same three points. Second, every projective plane outside \(\mathcal D\) contains at most \(2r\) points of \(S\). Indeed, a projective plane contained in the hyperplane at infinity contains no point of \(S\); otherwise its affine part is an affine plane in \(\F_q^5\), and if its projective closure is outside \(\mathcal D\), then that affine plane is outside \(\mathcal D_0\). Third, at least \(\delta q^9\) \(b\)-element subsets of \(S\) are contained in projective planes outside \(\mathcal D\), namely the closures of the affine planes counted in Lemma \ref{prop:algebraic-set}(v).

Fix a nondegenerate symmetric bilinear form on the underlying six-dimensional vector space. For a projective subspace \(U\subset\PG(5,q)\), write \(U^\perp\) for the projectivization of the vector-space orthogonal complement. This polarity satisfies \((U^\perp)^\perp=U\) and reverses containment. If \(U\) has projective dimension \(d\), then \(U^\perp\) has projective dimension \(4-d\). In particular, points correspond to hyperplanes, lines to 3-spaces, and planes to planes.

Let \(T\) be a uniformly random projective automorphism of \(\PG(5,q)\). Define a family of hyperplanes
\[
   \mathcal H=\{T(P)^\perp:P\in S\}.
\]
The map \(P\mapsto T(P)^\perp\) is injective, since \(T\) is injective and polarity is a bijection between points and hyperplanes.

Let \(\mathcal P_T\) be the family of projective planes contained in more than \(2r\) hyperplanes of \(\mathcal H\). If \(\Pi\in\mathcal P_T\), then there are more than \(2r\) points \(P\in S\) such that
\[
   \Pi\subset T(P)^\perp.
\]
By containment reversal, this is equivalent to \(T(P)\in\Pi^\perp\). Hence the projective plane \(T^{-1}(\Pi^\perp)\) contains more than \(2r\) points of \(S\), and therefore belongs to \(\mathcal D\). Denoting this plane by \(Q\), we have
\[
   Q=T^{-1}(\Pi^\perp),\qquad \Pi=T(Q)^\perp.
\]
Thus
\begin{equation}\label{eq:bad-planes}
   \mathcal P_T\subset \{T(Q)^\perp:Q\in\mathcal D\},\qquad |\mathcal P_T|=O_b(q^2).
\end{equation}
Moreover, no three hyperplanes in \(\mathcal H\) have a projective 3-space as their intersection. Indeed, suppose that
\[
   T(P_1)^\perp,\quad T(P_2)^\perp,\quad T(P_3)^\perp
\]
have an intersection of projective dimension \(3\). The orthogonal complement of that intersection is then a projective line containing the three points \(T(P_1)\), \(T(P_2)\), and \(T(P_3)\). Applying \(T^{-1}\) gives a projective line containing \(P_1\), \(P_2\), and \(P_3\), contradicting the fact that no three points of \(S\) are collinear.

Delete from \(\mathcal H\) every hyperplane containing a plane from \(\mathcal D\), and denote the remaining family by \(\mathcal H'\). Delete from \(S\) every point lying on a plane in \(\mathcal P_T\), and denote the remaining set by \(S'\). Define the bipartite incidence graph
\[
   G_T=(X_T\sqcup Y_T,E_T),\qquad X_T=S',\qquad Y_T=\mathcal H',
\]
where \((P,H)\in E_T\) if and only if \(P\in H\).

For points \(P_1,\dots,P_k\in X_T\), their common neighborhood in \(G_T\) is the set of hyperplanes in \(Y_T\) containing \(\langle P_1,\dots,P_k\rangle\). Dually, for hyperplanes \(H_1,\dots,H_k\in Y_T\), their common neighborhood is \(S'\cap H_1\cap\cdots\cap H_k\).

\begin{lemma}\label{lem:Kfree}
The graph \(G_T\) is \(K_{3,2r+1}\)-free.
\end{lemma}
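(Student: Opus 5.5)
We prove the lemma by contradiction. Suppose \(G_T\) contains a copy of \(K_{3,2r+1}\). There are two possible orientations: either the three vertices of the small side lie in \(X_T\), or they lie in \(Y_T\). We treat the two cases separately. In each case we use the dimension of the span (or intersection) of the three small-side vertices, together with the facts already established: no three points of \(S\) are collinear, planes outside \(\mathcal D\) carry at most \(2r\) points of \(S\), and \(\mathcal P_T\) contains every plane lying in more than \(2r\) hyperplanes of \(\mathcal H\).

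\emph{Case 1: three points \(P_1,P_2,P_3\in X_T=S'\) with \(2r+1\) common neighbours in \(Y_T\).}
\begin{enumerate}
\item Since no three points of \(S\) are collinear, the span \(\Pi=\langle P_1,P_2,P_3\rangle\) is a projective plane.
\item The common neighbours are hyperplanes of \(\mathcal H'\subset\mathcal H\) containing \(\Pi\). So \(\Pi\) lies in at least \(2r+1>2r\) hyperplanes of \(\mathcal H\), which means \(\Pi\in\mathcal P_T\).
\item Every point lying on a plane of \(\mathcal P_T\) was deleted from \(S\). Hence \(P_1\notin S'\), a contradiction.
\end{enumerate}

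\emph{Case 2: three hyperplanes \(H_i=T(P_i)^\perp\in Y_T=\mathcal H'\), \(i=1,2,3\), with \(2r+1\) common neighbours in \(S'\).}
\begin{enumerate}
\item The intersection \(H_1\cap H_2\cap H_3\) has projective dimension at least \(2\). We showed that no three hyperplanes of \(\mathcal H\) meet in a \(3\)-space, and three distinct hyperplanes cannot meet in a \(4\)-space. Hence \(\Pi=H_1\cap H_2\cap H_3\) is exactly a projective plane.
\item The \(2r+1\) common neighbours are points of \(S'\subset S\) lying on \(\Pi\). So \(\Pi\) contains more than \(2r\) points of \(S\), and therefore \(\Pi\in\mathcal D\), because every projective plane outside \(\mathcal D\) meets \(S\) in at most \(2r\) points.
\item But \(H_1\) contains \(\Pi\in\mathcal D\), so \(H_1\) was deleted when \(\mathcal H'\) was formed. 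This contradicts \(H_1\in Y_T\).
\end{enumerate}

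The main point needing care is the dimension claim in Case 2, which rules out the possibility that the three hyperplanes meet in something larger than a plane. This is exactly the earlier polarity argument excluding collinear triples. Once it is in place, each case contradicts one of the two deletion rules.
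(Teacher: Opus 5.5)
Your proof is correct and follows essentially the same route as the paper. Case 1 is identical to the paper's. In Case 2 you show that \(H_1\cap H_2\cap H_3\) is a plane from the previously established fact that no three hyperplanes of \(\mathcal H\) meet in a \(3\)-space, together with a dimension count; the paper instead computes it directly as \(\langle T(P_1),T(P_2),T(P_3)\rangle^\perp\), which is the same polarity argument packaged slightly differently.
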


\begin{proof}
Since \(G_T\) is bipartite with parts \(X_T=S'\) and \(Y_T=\mathcal H'\), there are two possible positions for the part of size three.

First assume that \(P_1,P_2,P_3\in X_T\) have at least \(2r+1\) common neighbors in \(Y_T\). Since no three points of \(S\) are collinear, these three points span a projective plane
\[
   \Pi=\langle P_1,P_2,P_3\rangle.
\]
Every common neighbor is a hyperplane containing \(\Pi\). Thus \(\Pi\) is contained in at least \(2r+1\) hyperplanes of \(\mathcal H\), so \(\Pi\in\mathcal P_T\). Each of \(P_1,P_2,P_3\) lies on \(\Pi\), and hence all three points would have been deleted in the formation of \(S'\), a contradiction.

Now suppose that \(H_1,H_2,H_3\in Y_T\) have at least \(2r+1\) common neighbors in \(X_T\). Write \(H_i=T(P_i)^\perp\) with \(P_i\in S\). The points \(T(P_1),T(P_2),T(P_3)\) are not collinear, since \(S\) contains no three collinear points. Hence they span a projective plane, and
\[
   J=H_1\cap H_2\cap H_3
     =\langle T(P_1),T(P_2),T(P_3)\rangle^\perp
\]
is a projective plane. This plane contains at least \(2r+1\) points of \(S\), namely the common neighbors of \(H_1,H_2,H_3\) in \(S'\). Therefore \(J\in\mathcal D\). Since each \(H_i\) contains \(J\), each \(H_i\) would have been deleted in the passage from \(\mathcal H\) to \(\mathcal H'\), a contradiction.
\end{proof}

\begin{lemma}\label{lem:many-Kbb}
For some choice of the projective automorphism \(T\), the graph \(G_T\) contains \(\Omega_b(q^9)\) ordered copies of \(K_{b,b}\) whose first side lies in \(S'\) and whose second side lies in \(\mathcal H'\).
\end{lemma}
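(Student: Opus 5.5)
We take a good $b$-set and build the $K_{b,b}$ around it. Let $B\subset S$ be one of the $\delta q^9$ $b$-element subsets from Lemma~\ref{prop:algebraic-set}(v), lying in a projective plane $Q\notin\mathcal D$. Since no three points of $S$ are collinear, $Q=\langle B\rangle$ is determined by $B$. Consider the hyperplanes $T(P)^\perp$ with $P\in B$. They all contain the plane
\[
\Pi:=T(Q)^\perp=\bigcap_{P\in B}T(P)^\perp .
\]
Call $B$ \emph{good for $T$} if two things hold. First, no point of $B$ lies on a plane of $\mathcal P_T$, so $B\subset S'$. Second, no hyperplane $T(P)^\perp$ with $P\in B$ contains a plane of $\mathcal D$, so these hyperplanes lie in $\mathcal H'$.

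For a good $B$, the $b$ hyperplanes $T(P)^\perp$ contain $\Pi$. The points of $S'$ on $\Pi$ are then common neighbours of all of them. Dually, the points of $B$ lie in every hyperplane containing $\langle T^{-1}(\cdots)\rangle$. To get a $K_{b,b}$ we therefore pair the side $\{T(P)^\perp:P\in B\}\subset\mathcal H'$ with $b$ points of $S'$ lying on $\Pi$. For this we need $|\Pi\cap S'|\ge b$.

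A cleaner approach symmetrizes: use \emph{two} good sets $B_1,B_2$. The points of $B_1$ lie on $\Pi_2=T(Q_2)^\perp$ exactly when $T(Q_2)\subset T(B_1)^{\perp}$ pointwise, which is too restrictive for $T$ random. So instead I would make $T$ a polarity-compatible map. Choose $T$ uniformly among automorphisms, and for a pair $(B_1,B_2)$ of good sets with planes $Q_1,Q_2$, ask for $T(Q_2)=Q_1^\perp$. The projective group acts transitively on ordered pairs of planes, so this event has probability $\asymp q^{-9}$, the reciprocal of the number of planes. When it holds, every $P\in B_1\subset Q_1$ and every $P'\in B_2$ satisfy $T(P')\in Q_1^\perp\subset P^\perp$. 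Hence $P\in T(P')^\perp$, and $(B_1,\{T(P')^\perp\})$ spans a $K_{b,b}$. The expected number of such configurations is $(\delta q^9)^2\cdot q^{-9}=\Omega(q^9)$, after we account for goodness.

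The main obstacle is showing that goodness holds with probability bounded below. The first goodness condition fails only if some point of $Q_1$ lies on $T(Q')^\perp$ for some $Q'\in\mathcal D$. Each point lies on a random plane with probability $O(q^{-3})$, there are $b$ points and $O(q^2)$ planes $Q'$, so the failure probability is $O(q^{-1})$. The second condition fails only if $T(P')^\perp\supset D$ for some $D\in\mathcal D$. This has probability $O(q^{-3})$ per pair, again $O(q^{-1})$ in total. Both estimates must be done conditionally on the event $T(Q_2)=Q_1^\perp$. I would use that the stabilizer of a plane still acts transitively enough that a fixed point, or a fixed plane of $\mathcal D$ not equal to $Q_1$ or $Q_2$, moves nearly uniformly. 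The degenerate cases where a plane of $\mathcal D$ meets $Q_1$ or $Q_2$ in a line would be handled separately by counting. With conditional failure probability $o(1)$, linearity of expectation gives a $T$ with $\Omega_b(q^9)$ ordered copies.
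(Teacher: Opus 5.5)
Your main count is the paper's. For $b$-sets $B_1,B_2$ on planes $Q_1,Q_2\notin\mathcal D$, the pair $(B_1,\{T(P')^\perp:P'\in B_2\})$ spans a $K_{b,b}$ before deletion iff $T(Q_2)=Q_1^\perp$. This event has probability $\asymp q^{-9}$. (It is exactly the condition $B_1\subset T(Q_2)^\perp$, so it is not ``too restrictive'', no polarity-compatible $T$ is needed, and only transitivity on planes is used.)

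The gap is in the deletion step, which you rightly flag as the main obstacle. Conditioned on $T(Q_2)=Q_1^\perp$, a fixed plane of $\mathcal D$ does \emph{not} move nearly uniformly, so your per-pair bound $O(q^{-1})$ on the conditional failure probability does not follow. For $Q'\in\mathcal D$ we have $T(Q')^\perp\cap Q_1=\langle T(Q'),T(Q_2)\rangle^\perp=T(\langle Q',Q_2\rangle)^\perp$. Hence $T(Q')^\perp$ can contain a point of $B_1$ only if $Q'$ meets $Q_2$. When $Q'\cap Q_2$ is a point, $T(Q')^\perp\cap Q_1$ is a single point uniformly distributed on $Q_1$. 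It therefore hits a given $P\in B_1$ with probability $\asymp q^{-2}$, not $q^{-3}$, and with probability $\asymp q^{-1}$ when $Q'\cap Q_2$ is a line. Dually, $T(P')$ is uniform on $Q_1^\perp$, and $T(P')\in D^\perp$ has conditional probability $0$, $\asymp q^{-2}$ or $\asymp q^{-1}$ according as $D\cap Q_1$ is empty, a point or a line.

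Point intersections are not a rare degeneracy you can set aside. The closure of an affine plane $H$ with $\dim\pi(H)=2$ meets exactly $|\Gamma_H|\ge q/2$ of the fibre planes $Q_{x,y}\in\mathcal D$. Those contribute only $O(q\cdot q^{-2})$. But nothing you have shown prevents a particular $Q_1$ or $Q_2$ from meeting $\Theta(q^2)$ planes of $\mathcal E_f$, and then the union bound gives only $O(1)$.

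The step can be repaired by averaging over pairs instead of bounding pair by pair. A fixed plane meets only $O(q^8)$ of the $\asymp q^9$ planes, so $\sum_Q\#\{D\in\mathcal D:D\cap Q\ne\emptyset\}=O(q^{10})$. Summing $\Pr[T(Q_2)=Q_1^\perp]\cdot\Pr[\text{failure}\mid T(Q_2)=Q_1^\perp]$ over all pairs then gives an expected loss of $O_b(q^8)$.

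The paper reaches the same $O_b(q^8)$ loss with no conditioning at all. It charges each destroyed copy to a deleted vertex on its first side (respectively a deleted hyperplane on its second side). It then uses a bound valid for \emph{every} $T$: a fixed vertex lies in $O_b(q^6)$ counted copies. Two further points of $A$ can be chosen in $O(q^6)$ ways, and they span $\Pi_A\notin\mathcal D$ with $|\Pi_A\cap S|\le 2r$. Then $\Pi_C=T^{-1}(\Pi_A^\perp)$ is forced, leaving $O_b(1)$ choices of $C$. Multiplying by the unconditional probability $O_b(q^{-1})$ that a given vertex is deleted, and summing over the $q^3$ vertices, gives $O_b(q^8)=o(q^9)$. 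This deterministic per-vertex bound is the ingredient your sketch is missing.
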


\begin{proof}
 
Let \(\mathcal B\) be the family of all \(b\)-element subsets \(A\subset S\)
which are contained in a projective plane outside \(\mathcal D\). Since \(S\) contains no three collinear points and \(b\ge 4\), this plane is unique for every \(A\in\mathcal B\); denote it by \(\Pi_A\). Lemma~\ref{prop:algebraic-set} gives
\[
|\mathcal B|\ge \delta q^9 .
\]
All probabilities and expectations are over the random projective automorphism \(T\). Let \(G_T^0\) be the bipartite incidence graph with parts \(S\) and \(\mathcal H\), before deletion. For \(A,C\in\mathcal B\), consider the ordered pair
\[
\left(A,\{T(R)^\perp:R\in C\}\right)
\]
in \(G_T^0\). For \(P\in A\) and \(R\in C\), the incidence \(P\in T(R)^\perp\) is equivalent, by symmetry of the polarity, to \(T(R)\in P^\perp\). Hence all incidences hold if and only if
\[
T(\Pi_C)\subseteq \bigcap_{P\in A}P^\perp=\Pi_A^\perp .
\]
Both sides are projective planes, so the inclusion is equality. Applying \(\perp\), we obtain the equivalent condition
\begin{equation}\label{eq:incidence-plane-condition}
\Pi_A=T(\Pi_C)^\perp .
\end{equation}
For fixed \(A,C\), the plane \(T(\Pi_C)^\perp\) is uniformly distributed among all projective planes of \(\mathrm{PG}(5,q)\). Since the number of projective planes is \((1+o(1))q^9\), condition \eqref{eq:incidence-plane-condition} has probability \((1+o(1))q^{-9}\). Summing over the \(|\mathcal B|^2=\Omega_b(q^{18})\) ordered pairs \((A,C)\), the expected number of such ordered \(K_{b,b}\) copies in \(G_T^0\) is \(\Omega_b(q^9)\).

It remains to show that the deletion destroys only \(O_b(q^8)\) of these copies in expectation. A copy destroyed by point deletion is assigned to one deleted vertex on its first side. Thus the expected loss from point deletion is at most
\[
\sum_{P\in S}\Pr[P\notin S']M_P,
\]
where \(M_P\) is the maximum, over all \(T\), of the number of copies counted above whose first side contains \(P\). We claim that \(M_P=O_b(q^6)\). Indeed, choose two further points of \(S\) on the first side; there are \(O(q^6)\) choices. Together with \(P\), they span \(\Pi_A\). If \(A\in\mathcal B\), then \(\Pi_A\notin \mathcal D\), so \(|\Pi_A\cap S|\le 2r\), and the remaining points of \(A\) have \(O_b(1)\) choices. Equation \eqref{eq:incidence-plane-condition} then determines
\[
\Pi_C=T^{-1}(\Pi_A^\perp).
\]
For a counted copy one must have \(C\in\mathcal B\), so \(\Pi_C\notin \mathcal D\) and \(|\Pi_C\cap S|\le 2r\); hence \(C\) has \(O_b(1)\) choices. Thus \(M_P=O_b(q^6)\).

For fixed \(P\in S\), we have \(\Pr[P\notin S']=O_b(q^{-1})\). Indeed, if \(P\notin S'\), then \(P\in\Pi\) for some \(\Pi\in \mathcal P_T\). By \eqref{eq:bad-planes}, this implies \(P\in T(Q)^\perp\) for some \(Q\in \mathcal D\). For fixed \(Q\), the plane \(T(Q)^\perp\) is uniformly distributed among all projective planes; the probability that it contains \(P\) is \((1+o(1))q^{-3}\). Since
\(|\mathcal D|=O_b(q^2)\), the union bound gives \(\Pr[P\notin S']=O_b(q^{-1})\). Therefore the expected loss from point deletion is
\[
O_b(q^3\cdot q^{-1}\cdot q^6)=O_b(q^8).
\]

The hyperplane deletion estimate is similar. Assign each copy destroyed by hyperplane deletion to one deleted vertex on its second side. A hyperplane vertex of \(\mathcal H\) has the form \(T(P)^\perp\), with \(P\in S\). For every \(T\), the number of counted copies whose second side contains this vertex is \(O_b(q^6)\): choose two further points of \(S\) in the indexing set \(C\), determine \(\Pi_C\), complete \(C\) in \(O_b(1)\) ways using
\(|\Pi_C\cap S|\le 2r\), and then determine
\[
\Pi_A=T(\Pi_C)^\perp
\]
from \eqref{eq:incidence-plane-condition}; if this gives a counted copy, then \(\Pi_A\notin \mathcal D\), so \(A\) has \(O_b(1)\) choices.

For fixed \(P\in S\), the event \(T(P)^\perp\notin \mathcal H'\) can occur only if \(T(P)^\perp\) contains some plane \(Q\in \mathcal D\), equivalently \(T(P)\in Q^\perp\). For fixed \(Q\), the plane \(Q^\perp\) contains \((1+o(1))q^2\) of the \((1+o(1))q^5\) points of \(\mathrm{PG}(5,q)\), so this probability is \((1+o(1))q^{-3}\). Taking the union bound over \(Q\in \mathcal D\) gives
\(\Pr[T(P)^\perp\notin \mathcal H']=O_b(q^{-1})\). Hence the expected loss from hyperplane deletion is also \(O_b(q^8)\).

Thus the expected number of ordered copies remaining after deletion is
\[
\Omega_b(q^9)-O_b(q^8)=\Omega_b(q^9).
\]
Consequently some choice of \(T\) has at least this many copies.
\end{proof}

\begin{lemma}\label{prop:lower-bound-graph}
Let \(3<a\le b\), and put \(r=\max\{3,\lceil b/2\rceil\}\). For every sufficiently large \(N\), there is an \(N\)-vertex bipartite \(K_{3,2r+1}\)-free graph containing \(\Omega_{a,b}(N^3)\) ordered copies \((A,B)\) of \(K_{a,b}\) with \(A\) in one bipartition class and \(B\) in the other.
\end{lemma}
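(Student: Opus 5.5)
We will take the graph \(G_T\) from Lemma~\ref{lem:many-Kbb}, with the choice of \(T\) made there, and pass from \(K_{b,b}\) to \(K_{a,b}\). After that only a routine padding argument is needed to handle every sufficiently large \(N\).

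\emph{Fixed \(q\).} Let \(q\) be a sufficiently large odd prime power and set \(r=\max\{3,\lceil b/2\rceil\}\). By Lemma~\ref{lem:Kfree}, \(G_T\) is \(K_{3,2r+1}\)-free. It has at most \(|S|+|\mathcal H|=2q^3\) vertices, because both \(|S|\) and \(|\mathcal H|\) equal \(q^3\). By Lemma~\ref{lem:many-Kbb}, \(G_T\) contains \(\Omega_b(q^9)\) ordered copies of \(K_{b,b}\) with first side in \(S'\) and second side in \(\mathcal H'\).

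From each such copy we keep the whole second side and any \(a\)-element subset of the first side; there are \(\binom ba\ge1\) such subsets. This gives ordered copies \((A,B)\) of \(K_{a,b}\) with \(A\subset S'\) and \(B\subset\mathcal H'\). One copy of \(K_{a,b}\) can arise from several copies of \(K_{b,b}\), so we must bound this overcount. Fix \(A\) with \(|A|=a\ge4\). Since \(S\) has no three collinear points, \(A\) spans a plane \(\Pi_A\), which is determined by any three points of \(A\). The first side of the original \(K_{b,b}\) is a set in \(\mathcal B\) whose plane is \(\Pi_A\), a plane outside \(\mathcal D\). Hence it lies in \(\Pi_A\cap S\), a set of at most \(2r\) points, so it can be completed in \(O_b(1)\) ways. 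Each preimage therefore occurs only \(O_b(1)\) times, and we obtain \(\Omega_{a,b}(q^9)\) ordered copies of \(K_{a,b}\) of the required kind. Since \(G_T\) has \(n_q\le 2q^3\) vertices, this count is \(\Omega(n_q^3)\).

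\emph{General \(N\).} We add isolated vertices to reach exactly \(N\) vertices; the graph stays bipartite and \(K_{3,2r+1}\)-free. Given \(N\), choose \(q\) to be the largest odd prime with \(2q^3\le N\). By Bertrand's postulate, or simply by using primes, which are odd prime powers, we have \(q\ge c(N)^{1/3}\) for large \(N\). The count \(\Omega(q^9)\) then becomes \(\Omega_{a,b}(N^3)\). The isolated vertices can be placed in either class.

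The only step needing care is the overcount control. If one prefers to avoid it, one can count distinct pairs \((A,B)\) directly: each pair is distinct as a set pair, and the multiplicity bound above is exactly what is needed. This bound rests on planes outside \(\mathcal D\) containing at most \(2r\) points of \(S\), which is property (iv) of Lemma~\ref{prop:algebraic-set}. Everything else is bookkeeping.
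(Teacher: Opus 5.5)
Your proof is correct and follows the paper's route: take \(G_T\), convert the \(\Omega_b(q^9)\) ordered \(K_{b,b}\) copies into \(K_{a,b}\) copies with an \(O_{a,b}(1)\) multiplicity bound, and pad with isolated vertices using Bertrand's postulate. The only difference is how you get the multiplicity bound. You argue geometrically that the first side lies in \(\Pi_A\cap S\), which relies on the counted first sides belonging to \(\mathcal B\); that fact comes from the proof of Lemma~\ref{lem:many-Kbb}, not its statement. The paper instead notes that any three vertices of \(B\) have at most \(2r\) common neighbors by Lemma~\ref{lem:Kfree}, so its bound holds for every \(K_{b,b}\) copy.
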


\begin{proof}
Choose \(q\) and construct \(G_T\) as above. By Lemma \ref{lem:Kfree}, the graph is \(K_{3,2r+1}\)-free. By Lemma \ref{lem:many-Kbb}, for some \(T\) it contains \(\Omega_b(q^9)\) ordered copies \((U,V)\) of \(K_{b,b}\) with \(U\subset X_T\) and \(V\subset Y_T\). Each such copy contains \(\binom ba\) ordered copies \((A,V)\) of \(K_{a,b}\) with \(A\subset U\).

We claim that any fixed ordered copy \((A,B)\), with \(A\subset X_T\) and \(B\subset Y_T\), arises from only \(O_{a,b}(1)\) of the ordered \(K_{b,b}\) copies counted above. Since \(|B|=b\), necessarily \(V=B\). To choose \(U\), one must add \(b-a\) vertices of \(X_T\) to \(A\), all adjacent to every vertex of \(B\). Because \(b\ge a>3\), choose three vertices of \(B\). Their common neighborhood has size at most \(2r\), since \(G_T\) is \(K_{3,2r+1}\)-free. Hence there are at most
\[
   \binom{2r}{b-a}=O_{a,b}(1)
\]
possible choices for \(U\). Thus the \(\Omega_b(q^9)\) incidences between ordered \(K_{b,b}\) copies and the ordered \(K_{a,b}\) copies obtained from them yield \(\Omega_{a,b}(q^9)\) distinct ordered copies of \(K_{a,b}\).

The graph has at most \(2q^3\) vertices. Given sufficiently large \(N\), apply Bertrand's postulate with \(x=(N/16)^{1/3}\). There is a prime \(q\) with \(x\le q\le 2x\); for large \(N\) this prime is odd. Then \(q=\Theta(N^{1/3})\) and \(2q^3\le N\). Adding isolated vertices gives an \(N\)-vertex graph, preserves \(K_{3,2r+1}\)-freeness, and leaves \(\Omega_{a,b}(N^3)\) ordered copies of \(K_{a,b}\).
\end{proof}

\begin{proof}[Proof of Theorem \ref{thm:main}]
Let
\[
r=\max\{3,\lceil b/2\rceil\}.
\]
Then \(\tau(b)=2r+1\). Since \(t\ge \tau(b)>b\) and \(3<a\le b\), the
upper bound follows immediately from \eqref{Mian-bound-general-tran-bi}, with \(s=3\):
\[
\operatorname{ex}(n,K_{a,b},K_{3,t})=O_{a,b,t}(n^3).
\]

It remains to establish the matching lower bound. By Lemma~\ref{prop:lower-bound-graph}, for every sufficiently large \(n\) there exists an \(n\)-vertex bipartite graph \(G\) which is \(K_{3,2r+1}\)-free and contains \(\Omega_{a,b}(n^3)\) ordered copies \((A,B)\) of \(K_{a,b}\), with \(A\) and \(B\) lying in the two prescribed bipartition classes. Since \(t\ge 2r+1\), every copy of \(K_{3,t}\) contains a copy of \(K_{3,2r+1}\). Thus \(G\) is \(K_{3,t}\)-free.

Finally, the distinction between these ordered copies and the usual copies of \(K_{a,b}\) affects the count by at most a constant factor depending only on \(a\) and \(b\). Therefore \(G\) contains \(\Omega_{a,b}(n^3)\) copies of \(K_{a,b}\) in the usual sense, and hence
\[
\operatorname{ex}(n,K_{a,b},K_{3,t})=\Omega_{a,b}(n^3).
\]
Combining the upper and lower bounds gives
\[
\operatorname{ex}(n,K_{a,b},K_{3,t})=\Theta_{a,b,t}(n^3),
\]
as claimed.
\end{proof}

\section{Concluding remarks}

Theorem \ref{thm:main} gives an explicit threshold for the \(K_{3,t}\) construction. The bound \(t\ge \tau(b)\) comes from the direct analysis of line and conic sections of the algebraic point set. In the odd case \(b\ge 5\), the result gives \(b+2\). This suggests the following problem.
\begin{problem}
For odd \(b\ge 5\) and fixed \(3<a\le b\), is
\[
   \ex(n,K_{a,b},K_{3,b+1})=\Theta_{a,b}(n^3)?
\]
\end{problem}

\section*{Declaration on the use of AI}

The authors used generative AI tools to assist in discussing proof strategies, checking proofs, and improving exposition.

\nocite{*}
\bibliographystyle{plain}
\bibliography{refs}

\end{document}